\newtheorem{theorem}{Theorem}
\newtheorem{lemma}[theorem]{Lemma}
\newtheorem{corollary}[theorem]{Corollary}
\newenvironment{proof}{
\par
\noindent {\bf Proof.}\rm}%
{\mbox{}\hfill\rule{0.5em}{0.809em}\par}
\title{\bf \normalsize
\Large The Target Set Selection Problem on
 Cycle Permutation Graphs, Generalized Petersen Graphs
 and
 Torus Cordalis}
\author{\small Chun-Ying~Chiang\thanks{Partially supported by
 National Science Council under grant NSC97-2628-M-008-018-MY3.},
 Liang-Hao~Huang\thanks{Partially supported by
 National Science Council under grant NSC98-2811-M-008-072.},
 Wei-Ting~Huang,
 Hong-Gwa
Yeh\thanks{Partially supported by National
 Science Council under
grant NSC100-2811-M-008-052}
 \thanks{Corresponding author (hgyeh@math.ncu.edu.tw)}\\
{\footnotesize \em Department of Mathematics, National Central
University, Taiwan}}
\date{}
\begin{document}

\maketitle
\baselineskip=17pt

\begin{abstract}
  In this paper we consider a
  fundamental problem in the area of viral marketing, called
  T{\scriptsize ARGET} S{\scriptsize ET} S{\scriptsize ELECTION}
  problem.
  In a a viral marketing setting,
  social networks are modeled by graphs
  with potential customers of a new product as vertices and friend relationships as edges, where
   each vertex $v$ is assigned a threshold value
  $\theta(v)$. The thresholds represent the different latent tendencies of
  customers (vertices) to buy the new product when their friend
  (neighbors) do.
  Consider a repetitive process on social network
  $(G,\theta)$ where each vertex $v$ is associated with two states, active and
  inactive, which indicate whether $v$ is persuaded into buying the
  new product.
  Suppose we are given a target set $S\subseteq V(G)$.
  Initially, all vertices in $G$ are inactive.
  At time step $0$, we choose all vertices in $S$ to become active.
  Then, at
  every time step $t>0$, all vertices that were active in time step $t-1$
  remain active, and we activate any vertex $v$
  if at least $\theta(v)$ of
  its neighbors were active at time step $t-1$. The activation
  process terminates when no more vertices can get activated.
  We are interested in the following optimization problem,
  called
  T{\scriptsize ARGET}
  S{\scriptsize ET}
  S{\scriptsize ELECTION}:
    Finding a target set $S$ of smallest possible size
  that activates all vertices of $G$.
   There is an important and well-studied threshold called
   strict majority threshold, where
   for every vertex $v$ in $G$ we have
    $\theta(v)=\lceil{(d(v) +1)/2}\rceil$ and  $d(v)$ is the
    degree of  $v$ in $G$.
   In this paper,
   we consider the
   T{\scriptsize ARGET}
   S{\scriptsize ET}
   S{\scriptsize ELECTION} problem under
   strict majority thresholds
   and
   focus on three popular regular network structures:
   cycle permutation graphs,
   generalized Petersen graphs
   and
   torus cordalis.
\medskip

\noindent
 {\bf \em Key words:} Social networks, viral marketing,
  influence spreading, majority voting, dynamos,
  target set selection, irreversible k-threshold,
   majority threshold, cycle permutation graph,
    generalized Petersen graph, torus cordalis, tori.
\end{abstract}
\parskip 4pt

\section{Introduction and preliminary results}  \label{intro}
 A {\em graph} $G$ consists of a set $V(G)$ of {\em vertices}
 together with a set $E(G)$ of unordered pairs of vertices called {\em edges}.
 We often use $uv$ for an edge $\{u,v\}$.
 Two vertices $u$ and $v$ are {\em adjacent}
 to each other if $uv \in E(G)$.

 %
 %
 In a viral marketing setting,
 a social network $(G,\theta)$ is a connected graph $G$ equipped with thresholds $\theta : V(G)\rightarrow \mathbb{Z}$,
 where
 each vertex represents a potential customer of a new product,
 and each edge indicates that the two people are friends.
 The thresholds represent the different latent tendencies of
 vertices (customers) to buy the new product
 when their neighbors (friends) do.
    There are three types of important and well-studied thresholds called
    {\em $k$-constant threshold},
   {\em majority threshold} and
   {\em strict majority threshold}.
   In a $k$-constant threshold,
   we have $\theta(v)=k$ for all vertices $v$ of $G$, and
   $(G,\theta)$ is abbreviated to $(G,k)$.
   In a majority threshold for every vertex $v$ in $G$ we have
   $\theta(v)=\lceil{d(v)/2}\rceil$, while in a strict majority
   threshold we have $\theta(v)=\lceil{(d(v) +1)/2}\rceil$,
   where $d(v)$ is the degree of $v$ in $G$.

 In a social network $(G,\theta)$,
 every vertex in $G$ has its own color which is either black or white, where black vertices represent {\em active} vertices, and
 white vertices represent {\em inactive} vertices.
 Given a set $S\subseteq V(G)$, consider the
 following repetitive 
 process on $(G,\theta)$
 called {\em activation process in  $(G,\theta)$ starting at
 target set $S$}.
 Initially (at time $0$),
 set all vertices in $S$ to be black
 (with all other vertices white).
  After that, at each time step,
  the states of vertices
  are updated according to the following rule:

  {\bf Parallel updating rule}:
   All inactive vertices $v$ that have at least
   $\theta(v)$ already-active neighbors become active.

  The activation process terminates
  when no more vertices can get activated.
  The set of vertices that are active at the end
  of the process is denoted by $[S]^G_\theta$.
  If $F\subseteq [S]^G_\theta$,
  then we say that the target set
  $S$ {\em influences} $F$ in $(G,\theta)$.
    We are interested in the following optimization problem,
  called
  T{\scriptsize ARGET}
  S{\scriptsize ET}
  S{\scriptsize ELECTION}:
    Finding a target set $S$ of smallest possible size
    that influences all vertices in the
    social network $(G,\theta)$,
    that is $[S]^G_\theta=V(G)$ (such set $S$ is called a
    {\em minimum seed} or an {\em optimal target set} for $(G,\theta)$).
  We define
  $\mbox{min-seed}(G,\theta)
  =\min\{|S|: S\subseteq V(G)$ and $[S]^G_\theta=V(G)\}$.

  %
  %
  %
  %
  The T{\scriptsize ARGET}
  S{\scriptsize ET}
  S{\scriptsize ELECTION} problem and some of its variants
  were introduced and studied in
  \cite{chen,Domingos2001,Dreyer+Roberts,
  kkt2003,kkt2005,Peleg1998,Peleg2002,
  Richardson2002,Roberts2003,Roberts2006}.
  It is not surprising that
  T{\scriptsize ARGET}
  S{\scriptsize ET}
  S{\scriptsize ELECTION} is NP-complete in general.
  Peleg \cite{Peleg2002} proved that it is NP-hard to
  compute the optimal target set for majority thresholds.
  In $k$-constant threshold setting,
  Dreyer and Roberts \cite{Dreyer+Roberts} showed that
  it is NP-hard to compute the min-seed$(G,k)$ for any
  $k\geq 3$, and Chen \cite{chen} showed that it is also NP-hard to
  compute min-seed$(G,2)$.
   More surprising is the fact that min-seed$(G,\theta)$
   is extremely hard to approximate.
   For any graph $G$ with majority thresholds $\theta$,
   Chen \cite{chen} proved that
   min-seed$(G,\theta)$
   cannot be
  approximated within the ratio
  $O(2^{\log^{1-\epsilon}n})$ for any fixed constant $\epsilon>0$,
  unless $NP\subseteq DTIME(n^{polylog(n)})$, where $n=|V(G)|$.

  Very little is known about the exact value of min-seed$(G,\theta)$.
  Related results can be found in
  \cite{Ackerman2010,Ben-zwi2011,Berger2001,chen,Yeh2011,
  Dreyer+Roberts,Flocchini2001,Flocchini2003,Flocchini2004,Flocchini2009,Luccio1998,
  Luccio1999,Peleg1998,Peleg2002}, where min-seed$(G,\theta)$ has
  been investigated under different threshold models for
  different types of network structure $G$:
  bounded treewidth graphs, trees,
  block-cactus graphs,  chordal graphs, Hamming graphs, chordal rings,
  tori, meshes, butterflies.
  %
  %
  %
   In the current paper,
   we consider
   T{\scriptsize ARGET}
   S{\scriptsize ET}
   S{\scriptsize ELECTION} problem under
   strict majority thresholds
   and
   focus on three popular network structures:
   cycle permutation graphs,
   generalized Petersen graphs
   and
   torus cordalis.

  %
  %
  %
  %
  Consider two identical disjoint copies $G_1$ and $G_2$ of a graph $G$
  with $p$ vertices $(p\geq 4)$,
  such that $V(G_1)=\{v_1,v_2,\ldots,v_p\}$
  and $V(G_2)=\{u_1,u_2,\ldots,u_p\}$,
  where $v_i$ and $u_i$ are corresponding vertices for each $i$.
  For a permutation $\pi$ on $\{1,2,\ldots,p\}$,
  the {\em $\pi$-permutation graph} of $G$, denoted by $P_\pi(G)$,
  consists of $G_1$ and $G_2$ along with $p$ additional edges
  $v_iu_{\pi(i)}$, $i=1,2,\ldots,p$.
  Note that the graph $P_\pi(G)$ depends not
  only on the choice of the permutation $\pi$ but
  on the particular labeling of $G$ as well.
  Permutation graphs were introduced in
  \cite{cycle-permutation-def,cycle-permutation-def-2}.
   The {\em $n$-path} $P_n$ is the graph  having
  $V(P_n)=\{x_1, x_2, \ldots, x_n\}$ and
  $E(P_n)=\{x_1x_2, x_2x_3, \ldots, x_{n-1}x_n\}$.
  The {\em $n$-cycle} $C_n$ is the graph  having
  $V(C_n)=V(P_n)$ and
  $E(C_n)=E(P_n)\cup\{ x_nx_1\}$.
  If $G$ is a cycle, then
  $P_\pi(G)$ is also called a
  {\em cycle permutation graph}.
  As examples, cycle permutation graphs $P_\pi(C_5)$
  are depicted in Figure 1.


 \begin{center}
 \includegraphics[scale=0.57]{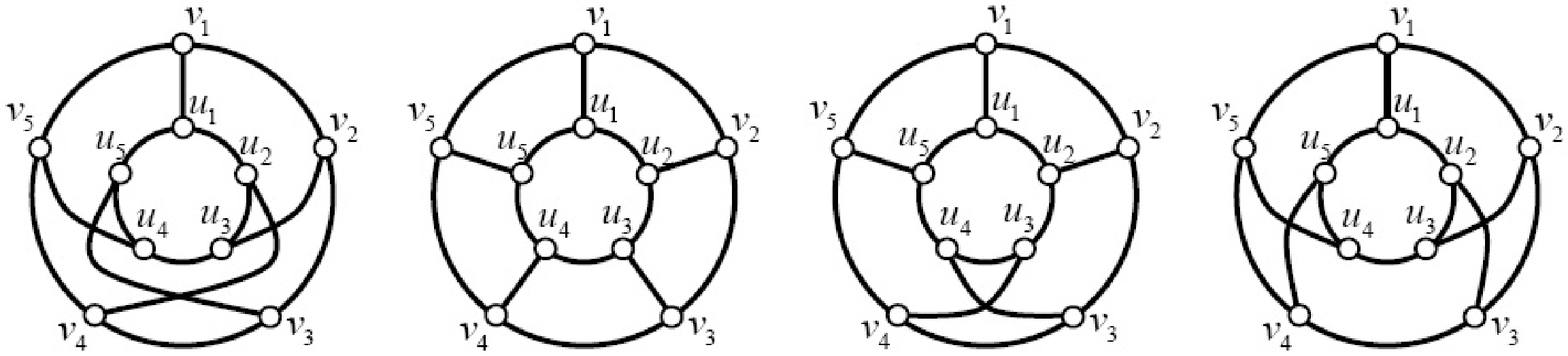}
 \begin{description}
    \item[Figure 1.]
    There are four  cycle permutation graphs for $P_\pi(C_5)$.
 \end{description}
 \end{center}

   %
   %
   %
   %
   For $m\geq 3$ and $1\leq s\leq \lfloor{m-1\over 2}\rfloor$,
   the {\em generalized Petersen graph} $P(m,s)$ is
   defined to be the graph with
   vertex set $$V(P(m,s))=\{v_1,v_2,\ldots,v_m,u_1,u_2,\ldots,u_m\}$$
   and edge set
   $$E(P(m,s))=\{v_iv_{i+1},u_iv_i,u_iu_{i+s}:i=1,2,\ldots,m\},$$
   where the subscripts are read modulo $m$.
   These graphs were introduced by Coxeter \cite{coxeter1950} and
   named by Watkins \cite{watkins1969}.
   As examples, generalized Petersen graphs $P(5,2)$, $P(10,2)$,
   and $P(10,4)$ are depicted in Figure 2.
   The connection between
   generalized Petersen graphs and
   cycle permutation graphs
   is given in
   \cite{gpg_iff_cycle-permutation}.
   By the results in \cite{gpg_iff_cycle-permutation}, we see
   that $P(10,4)$ is not a cycle permutation graph.
   Clearly, there are two cycle permutation graphs in Figure
   1 which are not generalized Petersen graphs.


 \begin{center}
 \includegraphics[scale=0.54]{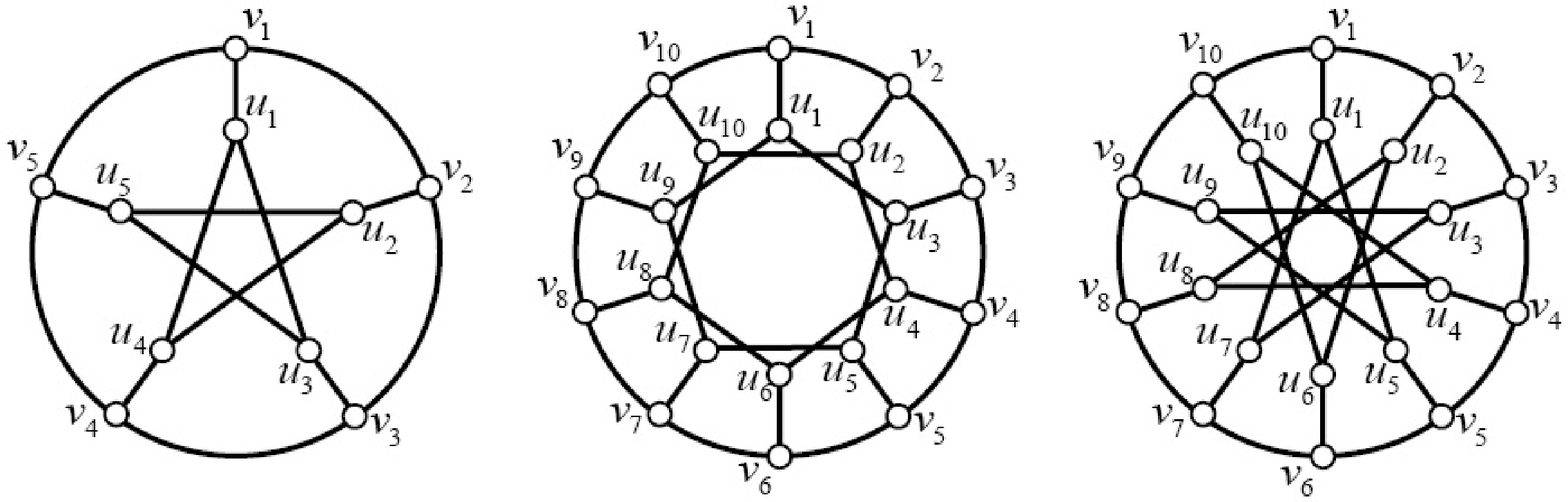}
 \begin{description}
    \item[Figure 2.]
   $P(5,2)\cong$ Petersen graph (left),
   $P(10,2)\cong$ dodecahedral graph (middle), and
   $P(10,4)$ (right).
 \end{description}
 \end{center}

 The $m\times n$ {\em toroidal mesh} $C_m\Box C_n$
 is the graph with vertex set
 $\{ (i,j): 1\leq i\leq m, 1\leq j\leq n\}$,
 where the neighbors of $(i,j)$ are
 $(i-1,j),(i+1,j),(i,j-1),(i,j+1)$.
 Here the arithmetic in the first coordinate is
 modulo $m$ and in the second coordinate modulo $n$.
 %
 The $m\times n$ {\em torus cordalis} $C_m\oslash C_n$
 and $m\times n$ toroidal mesh $C_m\Box C_n$ have the same vertex set.
 The edge set of $C_m\oslash C_n$
 is almost the same as $C_m\Box C_n$,
 except that the edge $(i,n)(i,1)$
 is replaced by the edge $(i,n)(i+1,1)$
 for  $1\leq i\leq m$.
 %
  The $m\times n$ {\em torus serpentinus} $C_m\otimes C_n$
 is almost the same as $C_m\oslash C_n$,
 except that the edge $(1,j)(m,j)$
 is replaced by the edge $(1,j)(m,j+1)$
 for  $1\leq j\leq n$.
 As examples, $C_4\Box C_3$,
   $C_4\oslash C_3$ and
   $C_4\otimes C_3$ are depicted in Figure 3.



 \begin{center}
 \includegraphics[scale=0.45]{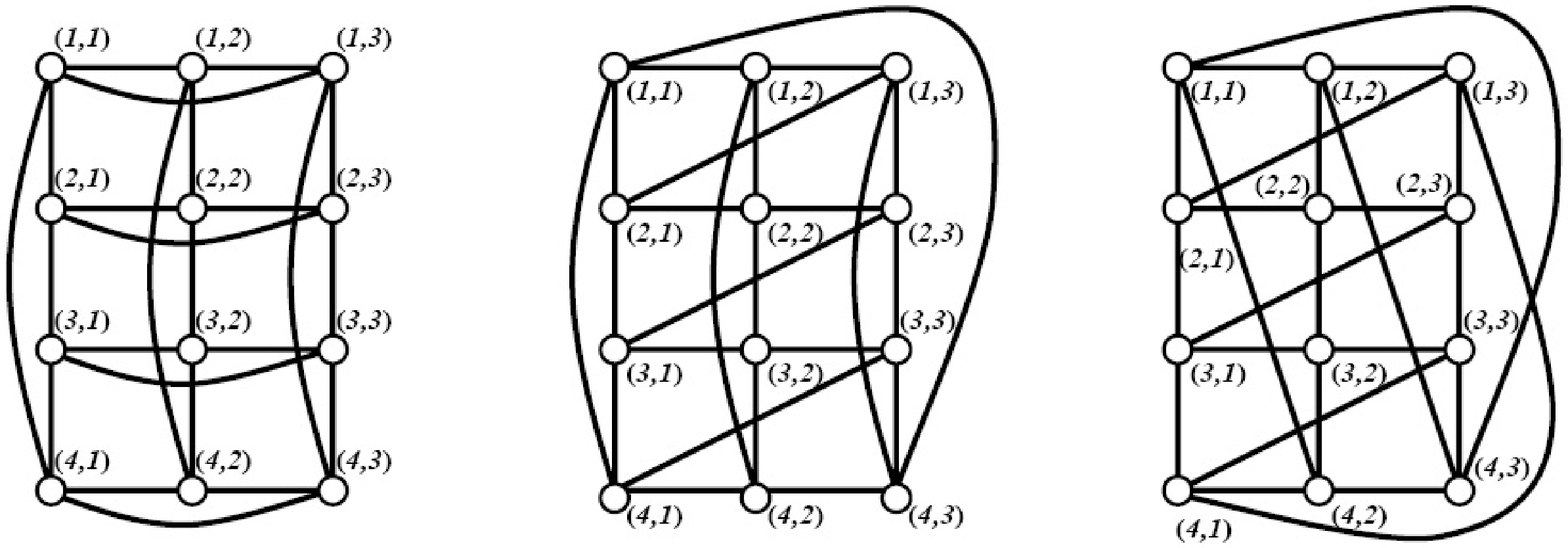}
 \begin{description}
    \item[Figure 3.]
   $C_4\Box C_3$ (left),
   $C_4\oslash C_3$ (middle), and
   $C_4\otimes C_3$ (right).
 \end{description}
 \end{center}

  %
  %
  %
  %
   In order to study the minimum seeds for $(G,\theta)$
   we introduce a sequential version of
   activation process in $(G,\theta)$,
   called {\em sequential activation process},
   which employs the following rule instead of the parallel updating
   rule:
  \begin{description}
          \item[Sequential updating rule:]
          Exactly one of
          inactive vertices that have at least $\theta(v)$
          already-active neighbors becomes active.
   \end{description}
    %
    %
   The proof of the following lemma is straightforward and
   so is omitted. In the sequel, Lemma \ref{seq=para}
   will be used without explicit reference to it.
   \begin{lemma}
   \label{seq=para}
   A minimum seed for $(G,\theta)$ under sequential updating
   rule is also a minimum seed for $(G,\theta)$ under parallel updating
   rule, and vice versa.
   \end{lemma}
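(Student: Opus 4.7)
The plan is to establish the stronger statement that for every target set $S\subseteq V(G)$ the set of vertices eventually activated under the sequential rule coincides with $[S]^G_\theta$, the parallel final set. Once this is shown, $S$ activates all of $V(G)$ under one rule precisely when it does so under the other, and the two notions of minimum seed agree.

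For the parallel-implies-sequential inclusion, I would induct on the parallel activation time $t$ of a vertex $v\notin S$. By the inductive hypothesis, the vertices that become parallel-active at times $0,1,\ldots,t-1$ all appear in some sequential trajectory; I extend that trajectory by activating the parallel-time-$t$ vertices one at a time in any order. Each such vertex still has at least $\theta(v)$ sequentially-active neighbors, since its triggering neighbors have already been scheduled. For the converse inclusion, I would induct on the sequential activation index $k$ of a vertex $v$. The first $k-1$ sequentially activated vertices lie in $[S]^G_\theta$ by the inductive hypothesis, and the $\theta(v)$ neighbors of $v$ that triggered its sequential activation are among them. Since the parallel process never deactivates a vertex, once those $\theta(v)$ neighbors are parallel-active the vertex $v$ meets its parallel threshold and is itself eventually parallel-active, so $v\in[S]^G_\theta$.

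The engine driving both inductions is monotonicity of the update rules: activations are permanent, so a threshold that becomes satisfied at one moment remains satisfied forever after, which is what licenses both the reordering used in the first direction and the ``catch-up'' argument in the second. I do not foresee a substantive obstacle; the argument is a straightforward double induction and requires no case analysis, which is presumably why the authors relegated it to a one-line remark.
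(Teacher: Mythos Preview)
Your argument is correct and is exactly the standard monotonicity/confluence proof one would expect here; the only nuance worth making explicit is that in the parallel-to-sequential direction you are really showing $[S]^G_\theta$ is contained in the final set of \emph{every} terminated sequential run (not merely of one you construct), which follows because any vertex with $\theta(v)$ neighbours already in the sequential active set cannot be left inactive at termination. The paper itself gives no proof at all---it declares the lemma ``straightforward'' and omits the argument---so there is nothing to compare against beyond noting that your write-up supplies precisely the routine verification the authors chose to skip.
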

  %
  %
   Consider a sequential activation process on $(G,\theta)$
   starting from a target set $S$.
   In this process, if $v_1,v_2,\ldots,v_r$ is the order that vertices
   in $[S]^G_\theta\setminus S$ become black,
   then $[v_1,v_2,\ldots,v_r]$
   is called the {\em convinced sequence}
   of $S$ on $(G,\theta)$.
   In order to describe a convinced sequence,
   we introduce an operation $\sqcup$
   on convinced subsequences.
   Let $\alpha=[v_1,v_2,\ldots,v_r]$
   and $\beta =[u_1,u_2,\ldots,u_s]$.
   Then $\alpha\sqcup \beta$ is defined as
   $$\alpha\sqcup \beta=[v_1,v_2,\ldots,v_r,u_1,u_2,\ldots,u_s],$$
   and for a list of convinced subsequences $\{\alpha_{i,j}\}_{1\leq i\leq \ell,
   1\leq j\leq k}$, the sequences $\sqcup_{i=1}^k\alpha_{i,j}$ and
   $\sqcup_{j=1}^\ell\sqcup_{i=1}^k \alpha_{i,j}$ are
   defined to be
   $$\sqcup_{i=1}^k\alpha_{i,j}=\alpha_{1,j}\sqcup\alpha_{2,j}\sqcup
   \cdots\sqcup\alpha_{k,j} \mbox{ and }
   \sqcup_{j=1}^\ell\sqcup_{i=1}^k \alpha_{i,j}=
   \sqcup_{j=1}^\ell(\sqcup_{i=1}^k \alpha_{i,j}).$$

   %
   %
   In Section \ref{CP-GP}, we precisely determine an optimal target set  for a social network $(G,\theta)$ when $G$ is a
   cycle permutation graph, and when $G$ is a generalized Petersen graph.
   In \cite{Flocchini2004}, Flocchini {\em et al.} constructed
   the following
   bounds on the size of a minimum seed for
   toroidal mesh $C_m \Box C_n$, torus cordalis $C_m \oslash C_n$,
    and  torus serpentinus $C_m \otimes C_n$ under
    strict majority thresholds.
    \begin{theorem}[\cite{Flocchini2004}]
    \label{bounds}
    (a) $\lceil{mn+1 \over 3}\rceil\leq \mbox{\rm min-seed}(C_m \oslash C_n,3)\leq
    \lceil{m \over 3}\rceil (n+1).$
    (b) If $G$ is a $C_m \Box C_n$ or a $C_m \otimes C_n$, then
    $\lceil{mn+1 \over 3}\rceil\leq \mbox{\rm min-seed}(G,3)\leq
    \min\{\lceil{m \over 3}\rceil (n+1), \lceil{n \over 3}\rceil
    (m+1)\}.$
    \end{theorem}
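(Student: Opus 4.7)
The lower bound $\lceil (mn+1)/3\rceil$ common to (a) and (b) depends only on the fact that each of the three graphs is $4$-regular, so the strict majority threshold equals $\lceil 5/2\rceil = 3$. My plan is a boundary-edge potential argument. Run a sequential activation process from an optimal target set $S$, and for each step $t$ define
\[
e_t = \#\{uv\in E(G): \text{exactly one of }u,v\text{ is active after step }t\}.
\]
I would verify: (i)~$e_0\leq 4|S|$, since every boundary edge at time $0$ is incident to a vertex of $S$ and each such vertex has degree $4$; (ii)~when a vertex $v\notin S$ is activated with $k\in\{3,4\}$ already-active neighbours, the $k$ edges from $v$ to active neighbours cease to be boundary edges while the $4-k$ edges from $v$ to inactive neighbours become boundary edges, so $e$ drops by $k-(4-k)=2k-4\geq 2$; (iii)~the \emph{last} activation must have $k=4$, since an inactive neighbour of the last-activated vertex would itself remain unactivated at termination; (iv)~$e_{\text{end}}=0$. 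Summing the decrements yields
\[
0 \;=\; e_{\text{end}} \;\leq\; 4|S| \;-\; 2\bigl((mn-|S|)-1\bigr) \;-\; 4 \;=\; 6|S|-2mn-2,
\]
whence $|S|\geq (mn+1)/3$, so $|S|\geq \lceil (mn+1)/3\rceil$. The argument uses nothing beyond $4$-regularity, so it applies identically to $C_m\oslash C_n$, $C_m\Box C_n$ and $C_m\otimes C_n$.

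For the upper bounds I would construct explicit target sets matching the claimed sizes and verify they activate the whole graph. For $C_m\oslash C_n$ the plan is to take $\lceil m/3\rceil$ periodically spaced rows, each contributing its $n$ vertices, together with one carefully placed ``igniter'' vertex per block of three rows, for a total of $\lceil m/3\rceil(n+1)$ seeds; the verification then proceeds column-by-column, the igniter forcing an inductive cascade across the two rows sandwiched between each pair of seeded rows. For $C_m\Box C_n$ and $C_m\otimes C_n$, which enjoy an extra symmetry between rows and columns that is broken in the cordalis by the shifted wraparound $(i,n)(i+1,1)$, the same construction can be run with the roles of $m$ and $n$ swapped, yielding also the dual bound $\lceil n/3\rceil(m+1)$ and hence the stated minimum.

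The main obstacle is the upper-bound construction. Two seeded rows at distance $2$ only give each middle-row vertex two already-active neighbours -- one from each seeded row -- which is one short of the threshold, so some igniter is essential to initiate a lateral chain reaction across the middle row. Choosing the column index of each igniter so that this chain reaction actually sweeps through every middle-row vertex, rather than stalling after a few activations, requires careful handling of the shifted wraparound edges of the cordalis and serpentinus, together with a case analysis depending on the residue of $m$ modulo $3$ so that the ceiling in $\lceil m/3\rceil$ is exactly right. The lower bound, by contrast, is short and uniform once the boundary-edge potential $e_t$ is identified; it is the tight control of the last-activation step (forcing $k=4$) that upgrades the naive $mn/3$ estimate to $\lceil (mn+1)/3\rceil$.
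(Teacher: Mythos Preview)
The paper does not prove Theorem~\ref{bounds}: it is quoted from \cite{Flocchini2004}, and the only argument the paper supplies is the remark that ``all lower bounds in Theorem~\ref{bounds} follow immediately from Lemma~\ref{lowerbound-for-target-set}.'' So the relevant comparison is between your lower-bound argument and Lemma~\ref{lowerbound-for-target-set}.

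Your boundary-edge potential argument is correct and genuinely different from the paper's route. The paper's Lemma~\ref{lowerbound-for-target-set} observes that $G\setminus S$ can have no induced subgraph of minimum degree $\geq \Delta-k+1$ (otherwise that subgraph would never activate), invokes Erd\H{o}s's theorem that average degree $\geq 2(\Delta-k)$ forces such a subgraph, and extracts $|S|\geq (|E|-(\Delta-k)|V|+1)/k$; plugging in $\Delta=4$, $k=3$, $|E|=2mn$, $|V|=mn$ gives $(mn+1)/3$. Your approach instead tracks the active/inactive cut size $e_t$ through a sequential run and squeezes out the same bound from $e_0\leq 4|S|$, the per-step drop $\geq 2$, and the drop of $4$ at the final step. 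Both arguments are short; yours is more self-contained (no appeal to an extremal result), while the paper's lemma is stated for arbitrary $\Delta$ and $k$ and thus reusable in other settings.

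For the upper bounds there is nothing in the paper to compare against: the constructions live in \cite{Flocchini2004}, and your sketch (seed $\lceil m/3\rceil$ full rows plus one igniter per block) is a reasonable plan but, as you yourself note, the verification that the cascade actually completes---especially across the shifted wraparound edges of the cordalis and serpentinus and for each residue of $m$ modulo $3$---is where all the work lies and is not carried out here.
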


    In Section \ref{torus-cordalis} of this paper, we present
    some
    improved upper bounds and exact values for
    the parameter $\mbox{\rm min-seed}(C_m \oslash C_n,3)$.
    These results are summarized in Table 1.

 %
 %
 \begin{center}
 \begin{tabular}{|l|l|l|l|} \hline\hline
 {\em Theorems}                   & $m\geq 10$ &  $n\geq 6$
 & $\Phi$ \\ \hline
  Theorem \ref{TC-mx3s-a-b-c}(a)
    & odd   &   $n\equiv 0$ (mod 3)   &
    $\Phi={mn\over 3}+1$       \\ \hline
  Theorem \ref{TC-mx3s-a-b-c}(b)
    & even  &   $n\equiv 0$ (mod 6)  &
    $\Phi={mn\over 3}+1$       \\ \hline
  Theorem \ref{TC-mx3s-a-b-c}(c)
    & even  &   $n\equiv 3$ (mod 6)  &
    $\Phi\in \{{mn\over 3}+1, {mn\over 3}+2\}$     \\ \hline
  Theorem \ref{TC-mx3s+1}
    &       &   $n\equiv 1$ (mod 3)  &
    $\Phi\leq {mn\over 3}+{m\over 6}+1$     \\  \hline
  Theorem \ref{TC-mx3s+2}
    &       &   $n\equiv 2$ (mod 3)  &
    $\Phi\leq {mn\over 3}+{m\over 12}+{3\over 2}$     \\   \hline
  Theorem \ref{TC-3txn}
    &  $m\equiv 0$ (mod 3)     &     &
    $\Phi= {mn\over 3}+1$     \\
 \hline\hline
 \end{tabular}\\
 \begin{description}
    \item[Table 1.] New bounds and exact values for
    $\mbox{\rm min-seed}(C_m \oslash C_n,3)$,  where $\Phi$ denotes
    the parameter
 $\mbox{\rm min-seed}(C_m \oslash C_n,3)$.
 \end{description}
 \end{center}

  %
  %
  %
  %
  %
  %
  \section{Cycle permutation graphs and generalized Petersen graphs}
  \label{CP-GP}
   For $X\subseteq V(G)$, let $G[X]$ denote the induced subgraph of $G$
   whose vertex set is $X$
   and whose edge set consists of all edges of $G$ which have both
   ends in $X$.  The number ${1\over |V(G)|}\sum_{v\in V(G)} d(v)$ is
   defined to be the {\em average degree} of $G$ and is denoted by
   $d(G)$.
   We remark that all lower bounds in Theorem \ref{bounds}
   follows immediately from Lemma \ref{lowerbound-for-target-set}.

  \begin{theorem}[\cite{erdos1964+1965}]
  \label{average-degree-minimum-degree}
  Every graph with average degree at least $2k$, where $k$ is a
  positive integer, has an induced subgraph with minimum degree at
  least $k+1$.
  \end{theorem}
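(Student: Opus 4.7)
The plan is to prove the statement by a greedy vertex-deletion procedure, peeling off low-degree vertices while maintaining a suitable edge-density invariant. Start with $H_0 := G$ and iteratively produce induced subgraphs $H_0 \supseteq H_1 \supseteq H_2 \supseteq \cdots$ as follows: if the current subgraph $H_i$ already has minimum degree at least $k+1$, stop and output $H_i$; otherwise pick any vertex $v \in V(H_i)$ with $d_{H_i}(v) \leq k$ and set $H_{i+1} := H_i - v$.

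The key invariant I would track is the edge-to-vertex ratio, namely
\[
|E(H_i)| \;\geq\; k\,|V(H_i)|,
\]
which is equivalent to the average degree being at least $2k$ and therefore holds for $H_0 = G$ by hypothesis. To verify that the invariant is preserved, observe that if $v \in V(H_i)$ has $d_{H_i}(v) \leq k$, then
\[
|E(H_{i+1})| \;=\; |E(H_i)| - d_{H_i}(v) \;\geq\; k\,|V(H_i)| - k \;=\; k\,|V(H_{i+1})|.
\]
So the invariant is inherited down the chain.

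Next I would argue that the process cannot drive $H_i$ down to a single vertex (or to the empty graph). Indeed, a single-vertex induced subgraph has no edges, so the invariant would force $0 \geq k \geq 1$, a contradiction; hence $|V(H_i)| \geq 2$ throughout. Since each step strictly decreases $|V(H_i)|$, the procedure must terminate after finitely many steps at some non-empty induced subgraph $H$, and by the stopping criterion $H$ has minimum degree at least $k+1$, as required.

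There is no real obstacle in this argument; the only point that requires a moment's care is ruling out the degenerate termination at $|V|=1$, which is precisely where the strict positivity of $k$ enters. The entire proof is a few lines once the invariant $|E(H)| \geq k\,|V(H)|$ is isolated, and it uses nothing beyond the elementary identity $d(G) = 2|E(G)|/|V(G)|$.
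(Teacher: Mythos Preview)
The paper does not actually prove this theorem; it merely cites it from Erd\H{o}s \cite{erdos1964+1965} and then applies it as a black box in the proof of Lemma~\ref{lowerbound-for-target-set}. Your greedy vertex-deletion argument is correct and is in fact the standard proof of this classical result: the invariant $|E(H_i)| \geq k\,|V(H_i)|$ is exactly the right quantity to track, and your termination argument (that a one-vertex graph would violate the invariant since $k\geq 1$) is the point where the hypothesis on $k$ enters.
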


  \begin{lemma}
  \label{lowerbound-for-target-set}
  Let $G$ be a graph with $n$ vertices, $m$ edges
  and maximum degree $\Delta$.
  If a target set $S\subseteq V(G)$ influences all vertices in
  the social network
  $(G,k)$, then $|S|\geq {m-(\Delta-k)n+1\over k}$.
  \end{lemma}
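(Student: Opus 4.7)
My plan is to apply the sequential formulation of the activation process (Lemma~\ref{seq=para}) and perform a two-sided edge count on $G[V(G)\setminus S]$. Fix a sequential activation starting from $S$ and let $v_1, v_2, \ldots, v_r$ be the resulting convinced sequence, where $r = n - |S|$.

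For the upper bound on $|E(G[V(G)\setminus S])|$: when $v_i$ is activated, at least $k$ of its neighbors are already active, so at most $d(v_i) - k \leq \Delta - k$ of its neighbors remain inactive at that instant, and those inactive neighbors are precisely the neighbors of $v_i$ lying in $\{v_{i+1},\ldots,v_r\}$. Summing over $i$ therefore tallies each edge of $G[V(G)\setminus S]$ exactly once (at its earlier endpoint). The crucial refinement is that $v_r$, being the last vertex activated, has \emph{all} of its neighbors already active at the moment it is activated, and so contributes $0$ (not $\Delta-k$) to the sum. This gives
\[
|E(G[V(G)\setminus S])| \;\leq\; (\Delta - k)(r-1) \;=\; (\Delta - k)(n - |S| - 1).
\]

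For the lower bound on $|E(G[V(G)\setminus S])|$, note that the edges incident to $S$ number $|E(G[S])| + |\partial S| = \sum_{v \in S} d(v) - |E(G[S])| \leq \Delta|S|$, where $\partial S$ denotes the edges between $S$ and $V(G)\setminus S$. Consequently $|E(G[V(G)\setminus S])| \geq m - \Delta|S|$. Combining the two bounds gives $m - \Delta|S| \leq (\Delta - k)(n - |S| - 1)$, which rearranges to $k|S| \geq m - (\Delta-k)n + (\Delta - k) \geq m - (\Delta-k)n + 1$; the last inequality uses $\Delta - k \geq 1$, which holds in every application of the lemma in this paper (the relevant graphs are $3$-regular with $k=2$ or $4$-regular with $k=3$).

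The main obstacle is producing the ``$+1$'' in the numerator: a uniform bound of $\Delta - k$ on each vertex's contribution would only yield $|S| \geq (m - (\Delta-k)n)/k$. The extra unit of slack comes entirely from the last-activated vertex $v_r$, whose contribution to $|E(G[V(G)\setminus S])|$ is $0$ rather than the generic $\Delta - k$. Once this observation is isolated, the rest is routine algebraic bookkeeping.
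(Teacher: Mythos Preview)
Your proof is correct and takes a genuinely different route from the paper's. The paper invokes Erd\H{o}s's extremal theorem (Theorem~\ref{average-degree-minimum-degree}) as a black box: since $G\setminus S$ can contain no induced subgraph of minimum degree at least $\Delta-k+1$, its average degree is strictly below $2(\Delta-k)$, and integrality converts that strict inequality into the ``$+1$''. You instead order $V(G)\setminus S$ by activation time and count the edges of $G[V(G)\setminus S]$ at their earlier endpoint, which is more elementary and self-contained---in effect you exhibit a $(\Delta-k)$-degeneracy ordering and thereby reprove the needed consequence of Erd\H{o}s's theorem directly. Your source for the ``$+1$'' (the last vertex $v_r$ contributes $0$) even yields the slightly sharper intermediate inequality $k|S|\ge m-(\Delta-k)n+(\Delta-k)$ before you weaken it. You are also right to isolate the hypothesis $\Delta-k\ge 1$: the paper's appeal to Theorem~\ref{average-degree-minimum-degree} silently needs it as well (that theorem is stated for a positive integer parameter), and indeed the lemma as written fails when $\Delta=k$---for instance $G=K_{k,k}$ with one bipartition class as $S$ gives $|S|=k<(k^2+1)/k$.
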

  \begin{proof}
  Let $\bar{S}=V(G)\setminus S$ and $G\setminus S=G[\bar{S}]$.
  Since $S$ influences
  all vertices in the social network $(G,k)$,
  the graph $G\setminus S$ has no induced subgraph
  with minimum degree at least $\Delta -k+1$.
  By Theorem \ref{average-degree-minimum-degree}, it follows that
  $$2(\Delta -k)>d(G\setminus S)={2|E(G\setminus S)|\over |V(G\setminus S)|}
  \geq {2(m-\Delta |S|)\over n-|S|},$$
  where the last inequality follows from the fact that
  if $e$ is an edge in $E(G)$ but not in $E(G\setminus S)$,
  then $e$ has an end in $S$. We conclude that
  $(\Delta-k)(n-|S|)\geq m-\Delta |S|+1$ which completes the proof of
  the theorem.
 \end{proof}


 \begin{theorem}
 \label{cycle-permutation}
 For any permutation $\pi$ and $n\geq 4$,
 $\mbox{\rm min-seed}(P_\pi(C_n),2)=\lceil{n+1\over 2}\rceil$.
 \end{theorem}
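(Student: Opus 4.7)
The plan is to prove the equality by matching a lower bound from Lemma \ref{lowerbound-for-target-set} with an explicit construction. First, I observe that $P_\pi(C_n)$ is $3$-regular: each $v_i$ has cycle-neighbors $v_{i-1},v_{i+1}$ together with matching partner $u_{\pi(i)}$, and similarly each $u_j$ has two $G_2$-cycle-neighbors plus one matching partner in $G_1$. Thus $P_\pi(C_n)$ has $2n$ vertices, $3n$ edges, and maximum degree $\Delta=3$. Applying Lemma \ref{lowerbound-for-target-set} with $k=2$ and $\Delta=3$ immediately yields
$$|S|\geq \frac{3n-(3-2)(2n)+1}{2}=\frac{n+1}{2},$$
and since $|S|$ is an integer, $|S|\geq \lceil(n+1)/2\rceil$.

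For the upper bound I plan to take alternating vertices of $G_1$ together with one carefully placed vertex of $G_2$. If $n$ is even, set $S=\{v_1,v_3,\ldots,v_{n-1}\}\cup\{u_j\}$ for an arbitrary $j$; if $n$ is odd, set $S=\{v_1,v_3,\ldots,v_{n-2}\}\cup\{u_{\pi(n-1)}\}$. In either case one checks $|S|=\lceil(n+1)/2\rceil$. The verification that $[S]^{P_\pi(C_n)}_2=V(P_\pi(C_n))$ proceeds in two stages. Stage 1 (activate $G_1$): when $n$ is even, every $v_{2i}$ has both cycle-neighbors $v_{2i-1},v_{2i+1}$ in $S$ (indices mod $n$), so all of $V(G_1)$ becomes active in a single parallel step. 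When $n$ is odd the same works for $v_{2i}$ with $2i\leq n-3$; the two leftover vertices $v_{n-1},v_n$ are then handled using $u_{\pi(n-1)}\in S$, which together with $v_{n-2}\in S$ activates $v_{n-1}$, after which $v_1\in S$ together with the newly active $v_{n-1}$ activates $v_n$. Stage 2 (activate $G_2$): once $V(G_1)$ is black, every $u_i\in V(G_2)$ already has the active matching neighbor $v_{\pi^{-1}(i)}$ in $G_1$. Starting from the single $G_2$-vertex already in $S$, activation then propagates around the cycle $G_2$ in both directions, since each newly-reached $u_i$ has an active $G_2$-neighbor (from the previous step) plus its matching partner $v_{\pi^{-1}(i)}$.

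The main obstacle will be the odd-$n$ case: the alternating set in $G_1$ leaves two consecutive vertices $v_{n-1},v_n$ that $G_1$ alone cannot activate, and the choice $u_{\pi(n-1)}\in S$ is exactly what bridges this gap through the matching edge. Aside from that, everything reduces to routine bookkeeping on the two-active-neighbor condition, so I expect no further obstacles.
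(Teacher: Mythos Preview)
Your proof is correct and follows essentially the same strategy as the paper: the lower bound comes from Lemma~\ref{lowerbound-for-target-set}, and the upper bound is obtained by seeding an alternating set of roughly $n/2$ vertices on the cycle $G_1$ together with a single vertex of $G_2$ whose matching edge bridges the one gap left in $G_1$, after which all of $G_1$ activates and $G_2$ then falls domino-style (each $u_i$ using its already-active matching partner plus one active $G_2$-neighbor). The paper's version differs only cosmetically---it phrases the $G_1$ part as a minimum seed for the path $G_1-\{v_1,v_2\}$ (citing Dreyer--Roberts) and normalizes to $\pi(1)=1$ so the extra vertex is $u_1$---whereas your explicit alternating set and direct choice of $u_{\pi(n-1)}$ are arguably a bit more self-contained.
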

 \begin{proof} Let $G=P_\pi(C_n)$.
  Suppose that $G$ consists of
  two disjoint copies $G_1$ and $G_2$ of $C_n$,
  such that
  $V(G_1)=\{v_1,v_2,\ldots,v_n\}$,
  $V(G_2)=\{u_1,u_2,\ldots,u_n\}$,
  $E(G_1)=\{v_1v_2,v_2v_3,\ldots,v_{n-1}v_n, v_nv_1\}$,
  $E(G_2)=\{u_1u_2,u_2u_3,\ldots,u_{n-1}u_n, u_nu_1\}$ and
  $E(G)=E(G_1)\cup E(G_2) \cup \{v_iu_{\pi(i)}: i=1,2,\ldots,n\}$.
  Without loss of generality, we might assume that $u_1v_1\in E(G)$.
  Let $H=G[\{v_3,v_4,\ldots,v_n\}]$.
  In the proof of Proposition 2 of \cite{Dreyer+Roberts}, it is shown
  that there is a minimum seed $S'$ for
  the social network $(H,2)$
  such that $|S'|=\lceil{(n-1)/ 2}\rceil$ and $\{v_3,v_n\}\subseteq S'$.
  Let $[v_{i_1},v_{i_2},\ldots,v_{i_a}]$
  be the convinced sequence of $S'$ on $(H,2)$, where $a=n-2-|S'|$.

  Choose the target set $S=S'\cup \{u_1\}$ for $(G,2)$.
  It can easily be check that $S$ can influence all vertices of
  $V(G)\setminus S$ in the social network $(G,2)$
  by using
  $[v_{i_1},v_{i_2},\ldots,v_{i_a}]\sqcup
  [v_1,v_2]\sqcup [u_2,u_3,\ldots,u_n]$
  as the convinced sequence.
  It follows that
  min-seed$(G,2)\leq |S|=\lceil{(n+1)/ 2}\rceil$.
 Moreover,
 from Lemma \ref{lowerbound-for-target-set},
 it is easy to see that min-seed$(G,2)
 \geq 
 \left\lceil{(n+1)/2}\right\rceil.$
 This completes the proof of the theorem.
 \end{proof}

 Clearly, if gcd$(m,s)=1$, then the generalized Petersen graph $P(m,s)$
 is a cycle permutation
 graph, and we see at once that the following corollary holds.
 In Theorem \ref{GPG-m-s} we further show that Corollary
 \ref{GPG-m-s-coprimes} holds if we drop the hypothesis
 $\gcd(m,s)=1$.

 \begin{corollary}
 \label{GPG-m-s-coprimes}
 If $\gcd(m,s)=1$, then
 $\mbox{\rm min-seed}(P(m,s),2)=\lceil{m+1\over 2}\rceil$.
 \end{corollary}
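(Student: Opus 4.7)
My plan is to deduce the corollary directly from Theorem \ref{cycle-permutation} by verifying that, under the coprimality hypothesis, $P(m,s)$ is isomorphic to a cycle permutation graph $P_\pi(C_m)$ for a suitable permutation $\pi$. Once this structural identification is in place, Theorem \ref{cycle-permutation} applied with $n=m$ yields the value $\lceil (m+1)/2\rceil$ immediately, since the strict majority threshold on a $3$-regular graph is $\lceil 4/2\rceil = 2$, matching the threshold in both statements.

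The key step is the structural identification. Recall that $P(m,s)$ has an ``outer'' set $\{v_1,\ldots,v_m\}$ inducing the $m$-cycle $v_1v_2\cdots v_m v_1$, an ``inner'' set $\{u_1,\ldots,u_m\}$ with edges $u_iu_{i+s}$ (subscripts modulo $m$), and the matching $v_iu_i$ between them. The inner edges connect $u_i$ and $u_{i+s}$, so starting from $u_1$ the sequence $u_1, u_{1+s}, u_{1+2s}, \ldots$ revisits $u_1$ after exactly $m/\gcd(m,s)$ steps. When $\gcd(m,s)=1$ this sequence visits every inner vertex exactly once, so the inner part is a single $m$-cycle. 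Relabel the inner vertices along this cycle as $u'_1,u'_2,\ldots,u'_m$ (so $u'_j := u_{1+(j-1)s \bmod m}$). Then both the outer cycle $v_1\cdots v_m v_1$ and the inner cycle $u'_1\cdots u'_m u'_1$ are copies of $C_m$, and the matching edges $v_iu_i$ correspond, under the new labeling, to edges of the form $v_i u'_{\pi(i)}$ for some permutation $\pi$ on $\{1,2,\ldots,m\}$. Hence $P(m,s)\cong P_\pi(C_m)$.

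Applying Theorem \ref{cycle-permutation} to $P_\pi(C_m)$ then yields $\mbox{min-seed}(P(m,s),2)=\lceil(m+1)/2\rceil$. The only ``obstacle'' is bookkeeping the relabeling carefully enough to be sure that the resulting $\pi$ is a bona fide permutation; this is automatic because the bijection $j\mapsto 1+(j-1)s \pmod{m}$ is a permutation of $\mathbb Z/m\mathbb Z$ exactly when $\gcd(m,s)=1$. No threshold issues arise, because $P(m,s)$ and $P_\pi(C_m)$ are both $3$-regular and we are using the same constant threshold $k=2$ throughout.
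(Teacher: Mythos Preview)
Your proposal is correct and follows essentially the same approach as the paper: the paper simply observes (in the sentence preceding the corollary) that when $\gcd(m,s)=1$ the generalized Petersen graph $P(m,s)$ is a cycle permutation graph, and then invokes Theorem~\ref{cycle-permutation}. You spell out the reason for this structural identification more explicitly than the paper does, but the argument is the same.
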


 \begin{theorem}
 \label{GPG-m-s}
 For $m\geq 3$ and $1\leq s\leq \lfloor{m-1\over 2}\rfloor$,
 $\mbox{\rm min-seed}(P(m,s),2)=\lceil{m+1\over 2}\rceil$.
 \end{theorem}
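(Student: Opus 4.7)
The lower bound $|S| \geq \lceil(m+1)/2\rceil$ is immediate from Lemma \ref{lowerbound-for-target-set}: the graph $P(m,s)$ is cubic on $2m$ vertices with $3m$ edges, so substituting $n=2m$, $|E|=3m$, $\Delta=3$ and $k=2$ into the lemma gives $|S| \geq (3m - 2m + 1)/2 = (m+1)/2$.

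For the upper bound I would distinguish two cases according to $d := \gcd(m,s)$. When $d=1$ the inner subgraph is a single $m$-cycle, $P(m,s)$ is itself a cycle permutation graph, and Corollary \ref{GPG-m-s-coprimes} supplies a seed of the required size. When $d \geq 2$, write $\ell = m/d$; the inner subgraph now decomposes into $d$ vertex-disjoint $\ell$-cycles $\mathcal{C}^{(r)} = (u_r, u_{r+s}, u_{r+2s}, \ldots, u_{r+(\ell-1)s})$ for $r = 1, \ldots, d$. The approach used in Theorem \ref{cycle-permutation} then fails because a lone inner seed can activate only one of the $d$ inner components. Instead, the plan is to take $S = \{u_1, u_2, \ldots, u_d\} \cup S_{\mathrm{out}}$: place one seed in each inner cycle and choose $S_{\mathrm{out}} \subseteq \{v_1, \ldots, v_m\}$ of size $\lceil(m+1)/2\rceil - d$. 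The outer seeds are arranged so that most consecutive pairs along the outer $m$-cycle lie at distance $2$ (the vertex in between then activates immediately from its two active outer neighbours), while the few distance-$3$ outer gaps are bridged by the inner seeds, exploiting the fact that each $u_j \in S$ with $j \in \{1,\ldots,d\}$ provides a ``free'' active neighbour to $v_j$ and so lets $v_j$ activate from a single active outer neighbour.

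Once $S$ is fixed, correctness is verified by exhibiting a convinced sequence that proceeds in four alternating stages: (i) the outer vertices lying in distance-$2$ outer gaps activate; (ii) each $v_j$ with $j \in \{1,\ldots,d\}$ activates using $u_j$ together with a neighbouring outer seed; (iii) each inner cycle $\mathcal{C}^{(r)}$ propagates from $u_r$ around $\mathcal{C}^{(r)}$, since $u_{r+ks}$ now sees both $u_{r+(k-1)s}$ and the outer partner $v_{r+ks}$ active; (iv) the remaining outer vertices activate via their newly-active inner partners together with an already-active outer neighbour. The main obstacle is selecting $S_{\mathrm{out}}$ so that these stages dovetail correctly for every admissible pair $(m,s)$ with $d \geq 2$, which requires splitting into subcases depending on the parities of $m$ and $d$ and on the size of $\ell$; in each subcase the distance-$3$ outer gaps have to be positioned so they are either directly covered by a vertex of $\{1,\ldots,d\}$ or reached via the inner-cycle propagation of stage (iii) before the activation process terminates. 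I expect the bookkeeping for the boundary cases (small $\ell$, or $d$ close to $m/2$) to be the most delicate part of the proof.
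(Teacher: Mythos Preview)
Your lower bound is correct and matches the paper's argument exactly.

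For the upper bound, however, your proposal is not a proof but a plan with an explicitly acknowledged gap: you have not specified $S_{\mathrm{out}}$, and you concede that choosing it so that the four stages ``dovetail correctly for every admissible pair $(m,s)$'' will require a case analysis on the parities of $m$ and $d$ and on the size of $\ell$, with delicate boundary cases. Until that construction is actually carried out and verified, the upper bound is not established.

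More importantly, the case split on $d=\gcd(m,s)$ is unnecessary, and the paper's proof avoids it entirely with a single uniform construction. Instead of placing $d$ inner seeds (one per inner cycle), place $s$ consecutive inner seeds $u_1,\ldots,u_s$, and seed the outer path $H=G[\{v_{s+1},\ldots,v_{m-s}\}]$ with a minimum seed $S'$ of size $\lceil(m-2s+1)/2\rceil$ containing both endpoints $v_{s+1},v_{m-s}$. The total is $s+\lceil(m-2s+1)/2\rceil=\lceil(m+1)/2\rceil$. Activation then proceeds in one sweep with no case distinctions: first $S'$ fills in the outer path $v_{s+1},\ldots,v_{m-s}$; then $v_s,v_{s-1},\ldots,v_1$ activate (each $v_i$ sees $v_{i+1}$ and $u_i\in S$); then $u_{s+1},u_{s+2},\ldots,u_{m-s}$ activate in order (each $u_{s+j}$ sees $u_j$, already active by induction, and $v_{s+j}$); then $u_{m-s+1},\ldots,u_m$ activate (each $u_{m-s+i}$ sees $u_i\in S$ and $u_{m-2s+i}$); finally $v_{m-s+1},\ldots,v_m$ activate. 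The key point you missed is that the inner propagation does not need to run around each inner cycle separately: using the spoke $u_{s+j}v_{s+j}$ together with the single inner edge $u_ju_{s+j}$ lets the inner vertices activate in index order regardless of how the inner graph decomposes into cycles.
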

 \begin{proof}
   Let $G=P(m,s)$.
   Suppose that
   $V(G)=\{v_1,v_2,\ldots,v_m,u_1,u_2,\ldots,u_m\}$
   and
   $E(G)=\{v_iv_{i+1},u_iv_i,u_iu_{i+s}:i=1,2,\ldots,m\}$,
   where the subscripts are read modulo $m$.
   Let $H$ be the graph
 $G[\{v_{s+1},v_{s+2},\ldots,v_{m-s}\}]$.
 Since $H$ is a $(m-2s)$-path, by the proof of Proposition 2 in
 \cite{Dreyer+Roberts}, we get a minimum seed $S'$ for
 the social network $(H,2)$ such that $|S'|=\lceil{(m-2s+1)/2}\rceil$
 and $\{v_{s+1},v_{m-s}\}\subseteq S'$.
 Let $[v_{j_1},v_{j_2},\ldots,v_{j_a}]$
  be the convinced sequence of $S'$ on $(H,2)$, where $a=m-2s-|S'|$.

 Consider $S=S'\cup \{u_1,u_2,\ldots,u_s\}$ as a target set
 for $(G,2)$.
 Since $u_{m-s+i}$ is adjacent to both
 $u_i$ and
 $u_{m-2s+i}$
 for $i\in \{1,2,\ldots,s\}$ and
 $u_{s+j}$ is adjacent to both
 $u_j$ and
 $v_{s+j}$ for $j\in \{1,2,\ldots,m-2s\}$,
 we see that $S$ can influence all vertices of $V(G)\setminus S$
 in the social network $(G,2)$ by using
 $[v_{j_1},v_{j_2},\ldots,v_{j_a}]\sqcup
  [v_s,v_{s-1},\ldots,v_1]\sqcup
  [u_{s+1},u_{s+2},\ldots,u_{m-s}]\sqcup
  [u_{m-s+1},u_{m-s+2},\ldots,u_m]\sqcup
  [v_{m-s+1},v_{m-s+2},\ldots,v_m]$
 as the convinced sequence.
 Therefore min-seed$(G,2)\leq |S|=|S'|+s=\lceil{(m+1)/ 2}\rceil$.
 Moreover,
 by Lemma \ref{lowerbound-for-target-set},
 it can be seen that min-seed$(G,2)
 \geq 
 \left\lceil{(m+1)/2}\right\rceil.$
 This complete the proof of the theorem.
\end{proof}

  %
  %
  %
  %
  %
  \section{Torus cordalis}
  \label{torus-cordalis}

  %
  %
  %
  \begin{theorem}
  \label{TC-mx3}
  $\mbox{\rm min-seed}(C_m \oslash C_3,3)=m+1$ for any $m\geq 3$.
  \end{theorem}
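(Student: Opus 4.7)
The plan is to prove both bounds. The lower bound $\mbox{min-seed}(C_m \oslash C_3, 3) \geq m+1$ follows immediately from Lemma \ref{lowerbound-for-target-set} applied to $G := C_m \oslash C_3$: this graph is $4$-regular on $3m$ vertices with $6m$ edges, and the strict-majority threshold is $k = 3$. Substituting into the lemma yields
$$|S| \;\geq\; \frac{6m - (4-3) \cdot 3m + 1}{3} \;=\; \frac{3m+1}{3},$$
and since $|S|$ is an integer we conclude $|S| \geq m+1$.

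For the matching upper bound I exhibit a target set of size exactly $m+1$. Define
$$S \;=\; \{(1,1),(1,3)\} \,\cup\, \{(i,2) : 2 \leq i \leq m,\ i \text{ even}\} \,\cup\, \{(i,1) : 3 \leq i \leq m,\ i \text{ odd}\},$$
which places two ``anchor'' seeds in row $1$ and exactly one seed in every subsequent row (column $2$ when the row index is even, column $1$ when odd); a direct count yields $|S| = 2 + (m-1) = m+1$.

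To show $[S]^G_3 = V(G)$ I describe an explicit convinced sequence for the sequential activation process. Start with $(1,2)$: three of its four neighbors $(1,1),(1,3),(m,2),(2,2)$ lie in $S$. Then activate $(2,1)$, all four of whose neighbors $(1,1),(3,1),(1,3),(2,2)$ are in $S$, followed by $(2,3)$, which has the three $S$-neighbors $(1,3),(3,1),(2,2)$. For each odd $i$ with $3 \leq i \leq m-1$, process the row-pair $(i,i+1)$ in the order $(i,2),\,(i+1,1),\,(i,3),\,(i+1,3)$; at each step the vertex being activated has at least three already-active neighbors, using the seeds in rows $i-1,i,i+1$ and $i+2$ (indices mod $m$) together with vertices activated in earlier rounds. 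Finally, if $m$ is odd, conclude by activating $(m,2)$ and then $(m,3)$, whose thresholds are met by the row-$(m-1)$ and row-$m$ seeds together with the row-$1$ seeds reached through the wraparound.

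The principal technical task is the routine but slightly tedious verification that the threshold-$3$ condition holds at each of the $2m-1$ activation steps. This requires a small case analysis on the parity of $m$ and on the behavior at the first round and at the final row; the alternating placement of seeds in columns $1$ and $2$ is chosen precisely so that no four consecutive inactive vertices along the Hamilton cycle $(1,1),(1,2),(1,3),(2,1),\ldots,(m,3)$ of length $3m$ in $G$ can form a ``cyclic deadlock'' in which each of them has only two active neighbors.
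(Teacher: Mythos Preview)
Your proof is correct and takes essentially the same approach as the paper. Your target set $S$ is identical to the paper's (odd rows seeded in column~$1$, even rows in column~$2$, plus the extra vertex $(1,3)$), and your lower bound via Lemma~\ref{lowerbound-for-target-set} is exactly what underlies the paper's appeal to Theorem~\ref{bounds}(a); the only difference is that the paper orders the convinced sequence column by column ($\alpha_1 \sqcup \alpha_2 \sqcup \alpha_3$) while you proceed row-pair by row-pair, which is immaterial.
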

  \begin{proof}
  Let $G=C_m \oslash C_3$.
  First we show that
  $\mbox{\rm min-seed}(G,3)\leq m+1$ by giving a target
  set $S\subseteq V(G)$ which influences all vertices of $G$.
  Denote by $S_1$ and $S_2$ the sets
  $\{(2i+1,1): 0\leq i\leq \lfloor {m-1 \over 2}\rfloor\}$
  and $\{(2i,2): 1\leq i\leq \lfloor {m\over 2}\rfloor\}$,
  respectively.
  Set $S=S_1\cup S_2\cup \{(1,3)\}$.
  Let
  $\alpha_1= [(2,1), (4,1),\ldots, (2\lfloor\frac{m}{2}\rfloor,1)]$, $\alpha_2=[ (1,2), (3,2),\ldots, (1+2\lfloor\frac{m-1}{2}\rfloor,2)]$
  and
  $\alpha_3=[ (2,3), (3,3), \ldots, (m,3)]$.
  It is straightforward to see that
  $   \alpha_1 \sqcup    \alpha_2 \sqcup   \alpha_3$ is a convinced
   sequence of $S$ on $(G,3)$
   (see Figure 1 in Appendix for a graphical illustration
   of this convinced sequence).
   By the lower bound of $\mbox{\rm min-seed}(C_m \oslash C_3,3)$ in
   Theorem \ref{bounds}(a), we see that $\mbox{\rm min-seed}(C_m \oslash C_3,3)=m+1$.
  \end{proof}

  %
  %
  %
  %
  \begin{theorem}
  \label{TC-mx3s-a-b-c}
   Let $s\geq 2$ be an integer.
  (a) If  $m\geq 5$ is an odd integer, then
  $\mbox{\rm min-seed}(C_m \oslash C_{3s},3)=ms+1$.
  (b) If  $m\geq 8$ is an even integer and $s$ is an even integer, then
  $\mbox{\rm min-seed}(C_m \oslash C_{3s},3)=ms+1$.
  (c) If  $m\geq 8$ is an even integer and $s$ is an odd integer, then
  $\mbox{\rm min-seed}(C_m \oslash C_{3s},3)=ms+1$ or $ms+2$.
  \end{theorem}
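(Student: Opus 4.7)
The lower bound $\mbox{\rm min-seed}(C_m\oslash C_{3s},3)\ge ms+1$ is immediate from Theorem~\ref{bounds}(a), since $\lceil(mn+1)/3\rceil=ms+1$ when $n=3s$. So only the upper bound needs proof, and in case~(c) the weaker bound $ms+2$ is enough.

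My plan is to iterate the construction of Theorem~\ref{TC-mx3} across $s$ column-blocks $B_k=\{3k+1,3k+2,3k+3\}$, $k=0,1,\ldots,s-1$. In each block I would use the same brick pattern used for $n=3$: odd-row seeds in column $3k+1$, even-row seeds in column $3k+2$, and a ``starter'' seed at $(1,3k+3)$. For $B_0$ I keep the entire pattern ($m+1$ seeds); for each $B_k$ with $k\ge 1$ I delete the single seed $(1,3k+1)$, leaving $m$ seeds in that block. The total is $(m+1)+(s-1)m=ms+1$, meeting the bound in cases (a) and (b). For case~(c), restore the deleted seed in one block $B_k$ ($k\ge 1$) to obtain $|S|=ms+2$.

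I would then exhibit a convinced sequence in three sweeps, each extending the one from Theorem~\ref{TC-mx3}: (i)~activate the interior ``easy'' vertices in each column $3k+1$ and $3k+2$ (each inheriting three seed neighbors from vertical and horizontal seeds), and activate $(2,3k+3)$ using the starter $(1,3k+3)$ together with the adjacent seeds $(2,3k+2)$ and (for $k=s-1$, via the cordalis edge) $(3,1)$; (ii)~cascade down column $3k+3$ from $(2,3k+3)$ through $(3,3k+3),\ldots,(m,3k+3)$, each new vertex collecting three active neighbors (its predecessor plus two seeds in columns $3k+2$ and $3k+4$); (iii)~close up the remaining vertices in columns $3k+2$ at rows $1$ and $m$, the deleted positions $(1,3k+1)$ for $k\ge 1$, and then $(2,3k+1)$ and $(m,3k+1)$, each of which now has three active neighbors in the surrounding completed columns.

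The main obstacle is sweep~(iii): I must verify that the row-parity pattern closes up compatibly both along each column $C_m$ (row $m$ adjacent to row $1$) and around the cordalis cycle from column $3s$ back to column $1$. A careful case analysis shows that when $m$ is odd (case~(a)) or when $m$ and $s$ are both even (case~(b)), both closures align automatically and $ms+1$ seeds suffice; but when $m$ is even and $s$ is odd (case~(c)) the block cycle has odd length and exactly one parity ``defect'' survives in the closure, which is eliminated by the single restored starter seed, yielding the upper bound $ms+2$ asserted in case~(c).
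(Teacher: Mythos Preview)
Your lower bound and the count $|S|=ms+1$ are correct, but the activation argument is flawed, and for even $m$ the seed set itself does not percolate.

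In sweep~(ii) you claim each $(i,3k+3)$ collects ``two seeds in columns $3k+2$ and $3k+4$'', but column $3k+2$ carries seeds only in even rows and column $3k+4$ only in odd rows, so at most one of the two horizontal neighbours is a seed. Likewise, in sweep~(i) the vertex $(2,3k+3)$ has only the two seed neighbours $(1,3k+3)$ and $(2,3k+2)$ when $k<s-1$: its right neighbour $(2,3k+4)$ is an even row in an odd-row seed column and is not a seed. For $m$ odd your seed set does in fact percolate, but only via a different order: activate columns $1$ and $2$ fully, then the interiors of all blocks, then cascade column $3s$ using the cordalis wrap to column~$1$, and finally sweep \emph{backwards} through columns $3s-1,3s-2,\ldots,3$, each column completing only after the one to its right is done. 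That is not the order you describe, and the paper's seed pattern for case~(a) is in any event different from yours (a five-row zigzag in rows $1$--$5$ together with an alternating pattern in rows $6$ through $m$, plus one extra seed $(4,1)$).

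For $m$ even the construction fails outright, independently of the parity of $s$. When $m$ is even the odd rows are $1,3,\ldots,m-1$, so deleting $(1,3k+1)$ leaves a three-row gap at rows $m,1,2$ of that column. Take $m=8$, $s=2$, which falls under your ``good'' case~(b): after activating everything that can be activated from your seed set, the ten vertices
\[
(2,3),(3,3),\ldots,(8,3),(8,4),(1,4),(2,4)
\]
form an induced $10$-cycle in which every vertex has exactly two active neighbours and two inactive neighbours, so the process halts with these ten vertices still white. This deadlock depends only on the seed pattern in columns $3$ and $4$ and therefore persists for every even $m\ge 8$ and every $s\ge 2$; your parity discussion in sweep~(iii) does not address it. The paper handles even $m$ with an entirely different template that appends a special three-row block $\{(m-2,1+3j),(m-1,3+3j),(m,2+3j)\}$ at the bottom, together with one further seed $(m-1,1)$ in case~(c).
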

  \begin{proof}
  Let $G=C_m \oslash C_{3s}$.

  (a) Denote by $S_1$ and $S_2$ the sets
  $\cup_{j=0}^{s-1}\{(1,1+3j), (2, 2+3j), (3, 1+3j), (4, 3+3j), (5, 2+3j)\}$
  and
  $\cup_{j=0}^{s-1}\cup_{i=0}^{m-7 \over 2} \{(6+2i,3+3j), (7+2i, 2+3j)\}$,
  respectively.
  Let $S=S_1\cup S_2 \cup \{(4,1)\}$.
  In the social network $(G,3)$,
  it is straightforward to check that the target set $S$ can influence
  all vertices in $V(G)\setminus S$ by using the convinced sequence
  $\alpha=\alpha_1\sqcup \alpha_2 \sqcup\alpha_3\sqcup \alpha_4
  \sqcup\alpha_5$, where
     $\alpha_1 = \sqcup_{j=0}^{s-1} [(1,2+3j), (2, 1+3j)]$,
     $\alpha_2 = \sqcup_{j=0}^{s-1}\sqcup_{i=0}^{m-7 \over 2}
                     [(5+2i,3+3j), (6+2i, 2+3j)]$,
     $\alpha_3 = [(5,1),(6,1),(7,1),\ldots, (m,1)]$,
     $\alpha_4 = [(4,2),(3,2),(3,3),(2,3),(1,3), (m,3)]$, and
     $\alpha_5 = \sqcup_{j=0}^{s-2}
   ([(m,4+3j),(m-1,4+3j),\ldots, (4,4+3j)]\sqcup
    [(4,5+3j),(3,5+3j),(3,6+3j),(2,6+3j),(1,6+3j),(m,6+3j)])$
    (see Figure 2 in Appendix for a graphical illustration
   of this convinced sequence $\alpha$).
    Therefore $\mbox{\rm min-seed}(C_m \oslash C_{3s},3)\leq
    |S|=ms+1$ and hence by Theorem \ref{bounds}(a), we have
    $\mbox{\rm min-seed}(C_m \oslash C_{3s},3)=
    ms+1$.

    (b) Denote by $S_1$, $S_2$ and $S_3$ the sets
  $\cup_{j=0}^{s-1}\{(1,1+3j), (2, 2+3j), (3, 1+3j), (4, 3+3j), (5,  2+3j)\}$,
  $\cup_{j=0}^{s-1}\cup_{i=0}^{m-10 \over 2} \{(6+2i,3+3j), (7+2i, 2+3j)\}$
  and
  $\cup_{j=0}^{s-1}\{(m-2,1+3j), (m-1, 3+3j), (m, 2+3j)\}$,
  respectively.
  Let $S=S_1\cup S_2 \cup S_3 \cup \{(4,1)\}$.
  It can readily be checked that the target set $S$ can influence
  all vertices in $V(G)\setminus S$ by using the convinced sequence
  $\alpha=\alpha_1\sqcup \alpha_2 \sqcup\alpha_3\sqcup \alpha_4
  \sqcup\alpha_5\sqcup\alpha_6\sqcup\alpha_7$
  (see Figure 3 in Appendix for a graphical illustration of this
  convinced sequence $\alpha$), where
  \begin{description}
    \item[$\alpha_1=$] $\sqcup_{j=0}^{s-1} [(1,2+3j), (2, 1+3j)]$,
    \item[$\alpha_2=$] $\sqcup_{j=0}^{s-1}\sqcup_{i=0}^{m-10 \over 2}
                     [(5+2i,3+3j), (6+2i, 2+3j)]$,
    \item[$\alpha_3=$] $[(5,1),(6,1),(7,1),\ldots, (m-3,1)]\sqcup [(m-3,3s)]$,
    \item[$\alpha_4=$] $[(4,2),(3,2),(3,3),(2,3),$ $(1,3), (m,3)]$,
    \item[$\alpha_5=$] $\sqcup_{k=0}^{s-4 \over 2}
   ([(m,4+6k), (m-1,4+6k), (m-1,5+6k), (m-2,5+6k), (m-2,6+6k), (m-3,6+6k)]\sqcup
    [(m-3,7+6k),(m-4,7+6k),\ldots,(4,7+6k)]\sqcup
    [(4,8+6k),(3,8+6k),(3,9+6k),(2,9+6k),(1,9+6k),(m,9+6k)])$,
    \item[$\alpha_6=$]
    $[(m,3s-2),(m-1,3s-2),(m-1,3s-1),(m-2,3s-1),(m-2,3s)]$, and
    \item[$\alpha_7=$] $\sqcup_{k=0}^{s-2 \over 2}
   ([(m,1+6k), (m-1,1+6k), (m-1,2+6k), (m-2,2+6k), (m-2,3+6k), (m-3,3+6k)]\sqcup
    [(m-3,4+6k),(m-4,4+6k),\ldots,(4,4+6k)]\sqcup
    [(4,5+6k),(3,5+6k),(3,6+6k),(2,6+6k),(1,6+6k),(m,6+6k)])$.
  \end{description}
    Therefore $\mbox{\rm min-seed}(C_m \oslash C_{3s},3)\leq
    |S|=ms+1$ and hence by Theorem \ref{bounds}(a), we have
    $\mbox{\rm min-seed}(C_m \oslash C_{3s},3)=
    ms+1$.

    (c) Denote by $S_1$, $S_2$ and $S_3$ the sets
  $\cup_{j=0}^{s-1}\{(1,1+3j), (2, 2+3j), (3, 1+3j), (4, 3+3j), (5,  2+3j)\}$,
  $\cup_{j=0}^{s-1}\cup_{i=0}^{m-10 \over 2} \{(6+2i,3+3j), (7+2i, 2+3j)\}$
  and
  $\cup_{j=0}^{s-1}\{(m-2,1+3j), (m-1, 3+3j), (m, 2+3j)\}$,
  respectively.
  Let $S=S_1\cup S_2 \cup S_3 \cup \{(4,1),(m-1,1)\}$.
 It is straightforward to check that the target set $S$ can influence
  all vertices in $V(G)\setminus S$ by using the convinced sequence
  $\alpha=\alpha_1\sqcup \alpha_2 \sqcup\alpha_3\sqcup \alpha_4
  \sqcup\alpha_5\sqcup\alpha_6\sqcup\alpha_7$
  (see Figure 4 in Appendix for a graphical illustration of this
  convinced sequence $\alpha$), where
    \begin{description}
        \item[$\alpha_1=$]
        $\sqcup_{j=0}^{s-1} [(1,2+3j), (2, 1+3j)]$,
        \item[$\alpha_2=$] $\sqcup_{j=0}^{s-1}\sqcup_{i=0}^{m-10 \over 2}
                     [(5+2i,3+3j), (6+2i, 2+3j)]$,
        \item[$\alpha_3=$] $[(5,1),(6,1),(7,1),\ldots, (m-3,1)]$,
        \item[$\alpha_4=$] $[(4,2),(3,2),(3,3),(2,3),(1,3), (m,3)]$,
        \item[$\alpha_5=$] $[(m,1),(m-1,2),(m-2,2),(m-2,3),(m-3,3)]$,
        \item[$\alpha_6=$] $\sqcup_{k=0}^{s-3 \over 2}
   ([(m-3,4+6k), (m-4,4+6k),\ldots, (4,4+6k)]\sqcup
    [(4,5+6k),(3,5+6k),(3,6+6k),(2,6+6k),(1,6+6k),(m,6+6k)]\sqcup
    [(m,7+6k),(m-1,7+6k),(m-1,8+6k),(m-2,8+6k),(m-2,9+6k),(m-3,9+6k)])$, and
        \item[$\alpha_7=$] $\sqcup_{k=0}^{s-3 \over 2}
   ([(m,4+6k), (m-1,4+6k), (m-1,5+6k), (m-2,5+6k), (m-2,6+6k), (m-3,6+6k)]\sqcup
    [(m-3,7+6k),(m-4,7+6k),\ldots,(4,7+6k)]\sqcup
    [(4,8+6k),(3,8+6k),(3,9+6k),(2,9+6k),(1,9+6k),(m,9+6k)])$.
    \end{description}
    Therefore $\mbox{\rm min-seed}(C_m \oslash C_{3s},3)\leq
    |S|=ms+2$ and hence by Theorem \ref{bounds}(a), we have
    $\mbox{\rm min-seed}(C_m \oslash C_{3s},3)=
    ms+1$ or $ms+2$.
  \end{proof}

  %
  %
  %
  %
  \begin{theorem}
  \label{TC-mx3s+1}
   If  $m\geq 8$ and $n\equiv 1 \pmod{3}$, then
  $\mbox{\rm min-seed}(C_m \oslash C_n,3)\leq {mn\over 3}+{m\over 6}+1$.
  \end{theorem}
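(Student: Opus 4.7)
The plan is to exhibit an explicit target set $S\subseteq V(C_m\oslash C_n)$ of size at most $\frac{mn}{3}+\frac{m}{6}+1$ together with a convinced sequence that activates every remaining vertex. Write $n=3s+1$ with $s\geq 2$, and observe the useful identity $\frac{mn}{3}+\frac{m}{6}+1=ms+1+\frac{m}{2}$. Accordingly I would split $S$ into two parts: a \emph{base} part $S'$ of size $ms+1$ placed on columns $1,\ldots,3s$, and an \emph{extra-column} part $S''$ of size $\lceil m/2\rceil$ placed on column $n$.

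For $S'$ I would take the seed pattern used in the proof of Theorem~\ref{TC-mx3s-a-b-c}, selecting the subcase that corresponds to the parities of $m$ and $s$. For $S''$ I would take $\{(2i-1,n):1\leq i\leq \lfloor m/2\rfloor\}$, i.e.\ every other vertex of column $n$. The convinced sequence then proceeds in two stages. Stage one activates columns $1,\ldots,3s$ by replaying the convinced sequence from Theorem~\ref{TC-mx3s-a-b-c}; the only subtlety is at the right-hand boundary, where the cordalis wraparound $(i,3s)(i+1,1)$ that is present in $C_m\oslash C_{3s}$ is replaced in $C_m\oslash C_n$ by the two edges $(i,3s)(i,3s+1)$ and $(i,n)(i+1,1)$, and I would verify that the seeds already sitting in column $n$ supply any active neighbour that is lost in column $3s$. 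Stage two activates the non-seed vertices of column $n$: each $(2i,n)$ has four neighbours $(2i-1,n)\in S''$, $(2i+1,n)\in S''$, $(2i,n-1)$ and $(2i+1,1)$, the last two of which are black at the end of stage one, so the threshold $3$ is easily met and each such vertex can be activated in succession.

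The main obstacle is the case analysis mirroring the three subcases of Theorem~\ref{TC-mx3s-a-b-c}. In subcase (c) the base construction natively uses $ms+2$ rather than $ms+1$ seeds, so one must show how to save one seed, for example by merging the extra seed of that subcase with the vertex of $S''$ that occupies the same row, or by exploiting the new adjacencies between column $3s$ and column $n$ to eliminate a redundant seed. When $m$ is odd, $|S''|=\lceil m/2\rceil$ and the bound $\frac{m}{6}$ is to be interpreted with an implicit ceiling; the construction and the two-stage verification are otherwise unchanged. Beyond this, the rest of the argument is a routine threshold check along the convinced sequence, carried out in the same spirit as the proof of Theorem~\ref{TC-mx3s-a-b-c}.
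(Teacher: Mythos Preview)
Your high-level idea---reuse the seed $S'$ of Theorem~\ref{TC-mx3s-a-b-c} on columns $1,\dots,3s$ and pad column $n$ with about $m/2$ further seeds---differs from the paper's construction (which instead \emph{shifts} the whole pattern of Theorem~\ref{TC-mx3s-a-b-c} one column to the right, onto columns $2,\dots,n$, and places the auxiliary seeds in column~$1$). Unfortunately your version, as written, has real gaps rather than merely cosmetic ones.

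\textbf{The size bound is not met.} For odd $m$ you take $|S'|=ms+1$ and $|S''|=\lceil m/2\rceil=(m+1)/2$, giving $|S|=ms+(m+3)/2=\tfrac{mn}{3}+\tfrac{m}{6}+\tfrac32$, which is strictly larger than the bound $\tfrac{mn}{3}+\tfrac{m}{6}+1$. Your remark that ``the bound $\tfrac{m}{6}$ is to be interpreted with an implicit ceiling'' is not a proof of the stated inequality; indeed the paper attains $\tfrac{mn}{3}+\tfrac{m}{6}+\tfrac12$ here. Similarly, for $m$ even and $s$ odd your base $S'$ from part~(c) already has $ms+2$ elements, so $|S|=ms+2+\tfrac{m}{2}$ exceeds the bound by~$1$; you note this but do not actually exhibit the promised saving.

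\textbf{The boundary ``replay'' fails.} In $C_m\oslash C_{3s}$ the vertex $(i,1)$ is adjacent to $(i-1,3s)$, whereas in $C_m\oslash C_n$ it is adjacent to $(i-1,n)$. Tracing the convinced sequence of Theorem~\ref{TC-mx3s-a-b-c}(a), the very first step of $\alpha_3$ activates $(5,1)$ using the three active neighbours $(4,1),(5,2),(4,3s)$. In your graph the wraparound neighbour is $(4,n)$, and with $S''=\{(2i-1,n)\}$ this vertex is \emph{not} a seed, so $(5,1)$ has only two active neighbours and cannot be activated. The subsequent steps of $\alpha_3$ need the wraparound neighbour in alternating parities of rows, so no alternating choice of $S''$ repairs all of them simultaneously. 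Thus the claim ``the seeds already sitting in column $n$ supply any active neighbour that is lost'' is false for the specific $S''$ you propose, and the verification you defer is the crux of the argument, not a routine check.
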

  \begin{proof}
  Let $G=C_m \oslash C_n$ and $n=3s+1$.
  The proof is divided into three cases,
  according to the parity of the two integers $m$ and $s$.

  {\bf Case 1.} $m\geq 5$ is odd.
  Let $S_1$, $S_2$ and $S_3$ denote the sets
  $\cup_{j=0}^{s-1}\{(1,2+3j), (2, 3+3j), (3, 2+3j), (4, 4+3j), (5, 3+3j)\}$,
  $\{(6,1), (8,1),\ldots, (m-1,1)\}$
  and
  $\cup_{j=0}^{s-1}\cup_{i=0}^{m-7 \over 2} \{(6+2i,4+3j), (7+2i, 3+3j)\}$,
  respectively.
  Let $S=\{(1,1), (2,1), (4,1)\} \cup S_1\cup S_2\cup S_3  $.
  It is straightforward to check that the target set $S$ can influence
  all vertices in $V(G)\setminus S$ by using the convinced sequence
  $\alpha=\alpha_1\sqcup \alpha_2 \sqcup\alpha_3\sqcup \alpha_4
  $, where
     $\alpha_1 = \sqcup_{j=0}^{s-1} [(1,3+3j), (2, 2+3j)]$,
     $\alpha_2 = \sqcup_{j=0}^{s-1}\sqcup_{i=0}^{m-7 \over 2}
                     [(5+2i,4+3j), (6+2i, 3+3j)]$,
     $\alpha_3 = [(3,1),(5,1),(7,1),\ldots, (m,1)]$, and
     $\alpha_4 = \sqcup_{j=0}^{s-1}
   ([(m,2+3j),(m-1,2+3j),\ldots, (4,2+3j)]\sqcup
    [(4,3+3j),(3,3+3j),(3,4+3j),(2,4+3j),(1,4+3j),(m,4+3j)])$
    (see Figure 5 in Appendix for a graphical illustration
   of this convinced sequence $\alpha$).
    Therefore $\mbox{\rm min-seed}(C_m \oslash C_n,3)\leq
    |S|={mn\over 3}+{m\over 6}+{1\over 2}$.

  {\bf Case 2.} $m\geq 8$ is even and $s$ is odd.
  Denote by $S_1$, $S_2$, $S_3$ and $S_4$ the sets
  $\cup_{j=0}^{s-1}\{(1,2+3j), (2, 3+3j), (3, 2+3j), (4, 4+3j), (5, 3+3j)\}$,
  $\{(6,1), (8,1),\ldots, (m-4,1)\}$,
  $\cup_{j=0}^{s-1}\cup_{i=0}^{m-10 \over 2} \{(6+2i,4+3j), (7+2i, 3+3j)\}$
  and
  $\cup_{j=0}^{s-1}\{(m-2,2+3j), (m-1, 4+3j), (m, 3+3j)\}$,
  respectively.
  Let $S=S_1\cup S_2 \cup S_3 \cup S_4\cup \{(1,1), (2,1), (4,1), (m-1,1)\}$.
  It can readily be checked that the target set $S$ can influence
  all vertices in $V(G)\setminus S$ by using the convinced sequence
  $\alpha=\alpha_1\sqcup \alpha_2 \sqcup\alpha_3\sqcup \alpha_4
  \sqcup\alpha_5\sqcup\alpha_6\sqcup\alpha_7$
  (see Figure 6 in Appendix for a graphical illustration of this
  convinced sequence $\alpha$), where
  \begin{description}
    \item[$\alpha_1=$] $\sqcup_{j=0}^{s-1} [(1,3+3j), (2, 2+3j)]$,
    \item[$\alpha_2=$] $\sqcup_{j=0}^{s-1}\sqcup_{i=0}^{m-10 \over 2}
                     [(5+2i,4+3j), (6+2i, 3+3j)]$,
    \item[$\alpha_3=$] $[(3,1),(5,1),(7,1),\ldots, (m-5,1)]$,
    \item[$\alpha_4=$] $[(m,1),(m,2),(m-1,2),(m-1,3),(m-2,3), (m-2,4), (m-3,4)]$,
    \item[$\alpha_5=$] $\sqcup_{k=0}^{s-3 \over 2}
   ([(m-3,5+6k), (m-4,5+6k),\ldots , (4,5+6k)]\sqcup
    [(4,6+6k),(3,6+6k),(3,7+6k),(2,7+6k),(1,7+6k),(m,7+6k)]\sqcup
    [(m,8+6k),(m-1,8+6k),(m-1,9+6k),(m-2,9+6k),(m-2,10+6k),(m-3,10+6k)])$,
    \item[$\alpha_6=$]
    $[(m-2,1),(m-3,1)]\sqcup
    [(m-3,2), (m-4,2),\ldots, (4,2)]\sqcup
    [(4,3),(3,3),(3,4),$ $(2,4),(1,4),(m,4)]$, and
    \item[$\alpha_7=$] $\sqcup_{k=0}^{s-3 \over 2}
   ([(m,5+6k), (m-1,5+6k), (m-1,6+6k), (m-2,6+6k), (m-2,7+6k), (m-3,7+6k)]\sqcup
    [(m-3,8+6k),(m-4,8+6k),\ldots,(4,8+6k)]\sqcup
    [(4,9+6k),(3,9+6k),(3,10+6k),(2,10+6k),(1,10+6k),(m,10+6k)])$.
  \end{description}
    Therefore $\mbox{\rm min-seed}(C_m \oslash C_n,3)\leq
    |S|={mn\over 3}+{m\over 6}$.

  {\bf Case 3.} $m\geq 8$ and $s$ are both even.
  Denote by $S_1$, $S_2$, $S_3$ and $S_4$ the sets
  $\cup_{j=0}^{s-1}\{(1,2+3j), (2, 3+3j), (3, 2+3j), (4, 4+3j), (5, 3+3j)\}$,
  $\{(6,1), (8,1),\ldots, (m-4,1)\}$,
  $\cup_{j=0}^{s-1}\cup_{i=0}^{m-10 \over 2} \{(6+2i,4+3j), (7+2i, 3+3j)\}$
  and
  $\cup_{j=0}^{s-1}\{(m-2,2+3j), (m-1, 4+3j), (m, 3+3j)\}$,
  respectively.
  Let $S=S_1\cup S_2 \cup S_3 \cup S_4\cup \{(1,1), (2,1), (4,1), (m-2,1),(m-1,1)\}$.
  It is straightforward to check that the target set $S$ can influence
  all vertices in $V(G)\setminus S$ by using the convinced sequence
  $\alpha=\alpha_1\sqcup \alpha_2 \sqcup\alpha_3\sqcup \alpha_4
  \sqcup\alpha_5$
  (see Figure 7 in Appendix for a graphical illustration of this
  convinced sequence $\alpha$), where
    \begin{description}
        \item[$\alpha_1=$]
        $\sqcup_{j=0}^{s-1} [(1,3+3j), (2, 2+3j)]$,
        \item[$\alpha_2=$] $\sqcup_{j=0}^{s-1}\sqcup_{i=0}^{m-10 \over 2}
                     [(5+2i,4+3j), (6+2i, 3+3j)]$,
        \item[$\alpha_3=$] $[(3,1),(5,1),(7,1),\ldots, (m-3,1)]\sqcup [(m,1)]$,
        \item[$\alpha_4=$] $\sqcup_{k=0}^{s-2 \over 2}
   ([(m-3,2+6k), (m-4,2+6k),\ldots, (4,2+6k)]\sqcup
    [(4,3+6k),(3,3+6k),(3,4+6k),(2,4+6k),(1,4+6k),(m,4+6k)]\sqcup
    [(m,5+6k),(m-1,5+6k),(m-1,6+6k),(m-2,6+6k),(m-2,7+6k),(m-3,7+6k)])$, and
        \item[$\alpha_5=$] $\sqcup_{k=0}^{s-2 \over 2}
   ([(m,2+6k), (m-1,2+6k), (m-1,3+6k), (m-2,3+6k), (m-2,4+6k), (m-3,4+6k)]\sqcup
    [(m-3,5+6k),(m-4,5+6k),\ldots,(4,5+6k)]\sqcup
    [(4,6+6k),(3,6+6k),(3,7+6k),(2,7+6k),(1,7+6k),(m,7+6k)])$.
    \end{description}
    We conclude that $\mbox{\rm min-seed}(C_m \oslash C_n,3)\leq
    |S|={mn\over 3}+{m\over 6}+1$. This completes the proof of the
    theorem.
  \end{proof}

  %
  %
  %
  %
  \begin{theorem}
  \label{TC-mx3s+2}
   If  $m\geq 10$, $n\equiv 2 \pmod{3}$ and $n\geq 5$, then
  $\mbox{\rm min-seed}(C_m \oslash C_n,3)\leq
  {mn\over 3}+{m\over 12}+{3\over 2}$.
  \end{theorem}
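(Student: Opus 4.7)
The proof follows the constructive template of Theorems \ref{TC-mx3s-a-b-c} and \ref{TC-mx3s+1}: I will exhibit an explicit target set $S\subseteq V(C_m\oslash C_n)$ together with a convinced sequence $\alpha$ showing that $S$ activates every vertex of $G=C_m\oslash C_n$, and then read $|S|$ off from the construction.

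Write $n=3s+2$. The idea is to partition the columns of the torus into two blocks. On the $3s$-column block consisting of columns $3,4,\ldots,n$, I re-use (after a cyclic shift in the second coordinate) the periodic seed pattern from Theorem \ref{TC-mx3s-a-b-c}, together with $O(1)$ anchor seeds to initiate the avalanche; this block contributes about $ms$ seeds. On the remaining two-column block consisting of columns $1$ and $2$, I place seeds following a period-$4$ pattern in the row index, producing about ${3m\over 4}$ seeds spread across the two columns (which is strictly cheaper than the naive $m$ seeds, because the two adjacent extra columns can mutually support each other during activation), plus a bounded number of adjustment seeds to accommodate the residue of $m$ modulo the period. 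Summing yields
\[
|S|\;\le\; ms+{3m\over 4}+{3\over 2}\;=\;{mn\over 3}+{m\over 12}+{3\over 2},
\]
which is the claimed bound.

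Parities of $m$ and $s$ control how each block closes up around the cycles of $G$, so I anticipate the argument will split into several cases in the spirit of Theorem \ref{TC-mx3s+1}: $m$ odd; $m$ even with $s$ odd; and $m$ even with $s$ even; with possible further subdivision by $m\bmod 12$ so that the period-$4$ pattern in columns $1$ and $2$ closes up cleanly on the length-$m$ cycle. In each case $S$ will be written as a disjoint union $S_1\cup S_2\cup\cdots$ of small explicit sets and $\alpha$ as a concatenation $\alpha_1\sqcup\alpha_2\sqcup\cdots$ built from the $\sqcup$ operator of Section \ref{intro}, accompanied by a graphical illustration in the Appendix.

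The main obstacle will be verifying correctness of $\alpha$ along the two interfaces where the two-column block meets the $3s$-column block: column $3$ meeting column $2$ via the standard horizontal edges, and column $n$ meeting column $1$ via the shifted wrap-around edges $(i,n)(i+1,1)$ that distinguish torus cordalis from toroidal mesh. Because of this shift, vertices activated in column $n$ feed into column $1$ offset by one in the row index, so the period-$4$ pattern in columns $1$ and $2$ must be aligned with the image of the periodic block under this shift in order to guarantee that each passive vertex of columns $1,2$ acquires at least three already-active neighbors at the moment it is appended to $\alpha$. Once such a compatible pattern and activation order are fixed, the bound on $|S|$ follows immediately by counting the sets $S_1,S_2,\ldots$.
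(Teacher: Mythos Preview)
Your proposal is essentially the paper's proof: the paper writes $m=4t+r$, $n=3s+2$, places a period-$4$ row pattern (three seeds per block of four rows) in columns $1,2$ and the shifted periodic pattern of Theorem~\ref{TC-mx3s-a-b-c} in columns $3,\ldots,n$, then verifies activation with explicit convinced sequences. The only adjustment is that the case split is governed by $r=m\bmod 4$ together with the parity of $s$ (six cases in all), rather than by the parity of $m$ or by $m\bmod 12$ as you anticipated.
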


  \begin{proof}
  Let $G=C_m \oslash C_n$, $m=4t+ r$ and  $n=3s+2$, where
  $t,r,s$ are integers with $0\leq r\leq 3$.
  The proof is divided into six cases,
  according to the value of $r$ and the parity of $s$.

  {\bf Case 1.} $r=0$ and $s$ is even. In this case,
  let $S_1$, $S_2$, $S_3$ and $S_4$ denote the sets
  $\cup_{j=0}^{s-1}\{(1, 3+3j), (2, 4+3j), (3, 3+3j)\}$,
  $\cup_{i=0}^{t-3}\{(4+4i,1), (6+4i,1), (6+4i,2)\}$,
  $\cup_{j=0}^{s-1}\cup_{i=0}^{t-3} \{(4+4i,5+3j), (5+4i, 3+3j), (6+4i, 5+3j), (7+4i, 3+3j)\}$
  and
  $\cup_{j=0}^{s-1}\{(m-4,5+3j), (m-3, 4+3j), (m-2, 3+3j), (m-1, 5+3j), (m,4+3j)\}$,
  respectively.
  Let $S=\{(2,1), (3,2), (m-4,1), (m-3,2), (m-1,1), (m,2)\}
  \cup S_1\cup S_2 \cup S_3 \cup S_4$.
  It is straightforward to check that the target set $S$ can influence
  all vertices in $V(G)\setminus S$ by using the convinced sequence
  $\alpha=\alpha_1\sqcup \alpha_2 \sqcup\alpha_3\sqcup \alpha_4
  \sqcup\alpha_5\sqcup\alpha_6\sqcup\alpha_7\sqcup\alpha_8$
  (see Figure 8 in Appendix for a graphical illustration of this
  convinced sequence $\alpha$), where
  \begin{description}
    \item[$\alpha_1=$] $\sqcup_{j=0}^{s-1} [(1,4+3j), (2, 3+3j)]$,
    \item[$\alpha_2=$] $\sqcup_{i=0}^{t-3} [(7+4i,1),(7+4i,2),(5+4i,1),(5+4i,2),(4+4i,2)]$,
    \item[$\alpha_3=$] $\sqcup_{j=0}^{s-1}\sqcup_{i=0}^{t-3}
                     [(4+4i,3+3j), (5+4i,5+3j), (6+4i,3+3j), (7+4i,5+3j)]$,
    \item[$\alpha_4=$] $[(m-4,2),(m-3,1)]$,
    \item[$\alpha_5=$] $\sqcup_{k=0}^{s-2 \over 2}
   ([(m-3,3+6k), (m-4,3+6k)]\sqcup
    [(m-4,4+6k),(m-5,4+6k),(m-6,4+6k),\ldots, (3,4+6k)]\sqcup
    [(3,5+6k), (2,5+6k), (1,5+6k), (m,5+6k)]\sqcup
    [(m,6+6k),(m-1,6+6k),(m-1,7+6k),(m-2,7+6k),(m-2,8+6k),(m-3,8+6k)])$,
    \item[$\alpha_6=$] $[(m-2,1),(m-2,2),(m-1,2),(m,1)]$,
    \item[$\alpha_7=$]
    $[(3,1),(2,2),(1,2),(1,1)]$, and
    \item[$\alpha_8=$] $\sqcup_{k=0}^{s-2 \over 2}
   ([(m,3+6k), (m-1,3+6k), (m-1,4+6k), (m-2,4+6k), (m-2,5+6k), (m-3,5+6k), (m-3,6+6k), (m-4,6+6k)]\sqcup
    [(m-4,7+6k),(m-5,7+6k),(m-6,7+6k),\ldots,(3,7+6k)]\sqcup
    [(3,8+6k),(2,8+6k),(1,8+6k),(m,8+6k)])$.
  \end{description}
    We conclude that $\mbox{\rm min-seed}(C_m \oslash C_n,3)\leq
    |S|={mn\over 3}+{m\over 12}$.

  {\bf Case 2.} $r=0$ and $s$ is odd. In this case, let
   $S_1$, $S_2$, $S_3$ and $S_4$ denote the sets
  $\cup_{j=0}^{s-1}\{(1, 3+3j), (2, 4+3j), (3, 3+3j)\}$,
  $\cup_{i=0}^{t-3}\{(4+4i,1), (6+4i,1), (6+4i,2)\}$,
  $\cup_{j=0}^{s-1}\cup_{i=0}^{t-3} \{(4+4i,5+3j), (5+4i, 3+3j), (6+4i, 5+3j), (7+4i, 3+3j)\}$
  and
  $\cup_{j=0}^{s-1}\{(m-4,5+3j), (m-3, 4+3j), (m-2, 3+3j), (m-1, 5+3j), (m,4+3j)\}$,
  respectively.
  Let $S=\{(2,1), (3,2), (m-4,1), (m-3,2), (m-2,1), (m-1,1), (m,2)\}
  \cup S_1\cup S_2 \cup S_3 \cup S_4$.
  It is straightforward to check that the target set $S$ can influence
  all vertices in $V(G)\setminus S$ by using the convinced sequence
  $\alpha=\alpha_1\sqcup \alpha_2 \sqcup\alpha_3\sqcup \alpha_4
  \sqcup\alpha_5\sqcup \alpha_6 \sqcup\alpha_7\sqcup \alpha_8 \sqcup\alpha_9$
  (see Figure 9 in Appendix for a graphical illustration of this
  convinced sequence $\alpha$), where
    \begin{description}
    \item[$\alpha_1=$] $\sqcup_{j=0}^{s-1} [(1,4+3j), (2, 3+3j)]$,
    \item[$\alpha_2=$] $\sqcup_{i=0}^{t-3} [(7+4i,1),(7+4i,2),(5+4i,1),(5+4i,2),(4+4i,2)]$,
    \item[$\alpha_3=$] $\sqcup_{j=0}^{s-1}\sqcup_{i=0}^{t-3}
                     [(4+4i,3+3j), (5+4i,5+3j), (6+4i,3+3j), (7+4i,5+3j)]$,
    \item[$\alpha_4=$] $[(m-4,2),(m-3,1),(m-2,2),(m-1,2),(m,1)]$,
    \item[$\alpha_5=$] $[(3,1),(2,2),(1,2),(1,1)]$,
    \item[$\alpha_6=$] $[(m-3,3), (m-4,3)]\sqcup
    [(m-4,4),(m-5,4),(m-6,4),\ldots, (3,4)]\sqcup
    [(3,5),$\\$(2,5), (1,5), (m,5)]$,
    \item[$\alpha_7=$] $[(m,3),(m-1,3),(m-1,4),(m-2,4),(m-2,5),(m-3,5)]$,
    \item[$\alpha_8=$] $\sqcup_{k=0}^{s-3 \over 2}
   ([(m-3,6+6k), (m-4,6+6k)]\sqcup
    [(m-4,7+6k),(m-5,7+6k),(m-6,7+6k),\ldots, (3,7+6k)]\sqcup
    [(3,8+6k), (2,8+6k), (1,8+6k), (m,8+6k)]\sqcup
    [(m,9+6k),(m-1,9+6k),(m-1,10+6k),(m-2,10+6k),(m-2,11+6k),(m-3,11+6k)])$,
    and
    \item[$\alpha_9=$] $\sqcup_{k=0}^{s-3 \over 2}
   ([(m,6+6k), (m-1,6+6k), (m-1,7+6k), (m-2,7+6k), (m-2,8+6k), (m-3,8+6k), (m-3,9+6k), (m-4,9+6k)]\sqcup
    [(m-4,10+6k),(m-5,10+6k),(m-6,10+6k),\ldots,(3,10+6k)]\sqcup
    [(3,11+6k),(2,11+6k),(1,11+6k),(m,11+6k)])$.
    \end{description}
    It follows that $\mbox{\rm min-seed}(C_m \oslash C_n,3)\leq
    |S|={mn\over 3}+{m\over 12}+1$.

  {\bf Case 3.} $r=1$.
  In this case, let $S_1$, $S_2$, $S_3$ and $S_4$ denote the sets
  $\cup_{j=0}^{s-1}\{(1, 3+3j), (2, 4+3j), (3, 3+3j)\}$,
  $\cup_{i=0}^{t-2}\{(4+4i,1), (6+4i,1), (6+4i,2)\}$,
  $\cup_{j=0}^{s-1}\cup_{i=0}^{t-2} \{(4+4i,5+3j), (5+4i, 3+3j), (6+4i, 5+3j), (7+4i, 3+3j)\}$
  and
  $\cup_{j=0}^{s-1}\{(m-1, 5+3j), (m,4+3j)\}$,
  respectively.
  Let $S=\{(2,1), (3,2), (m-1,1), (m,2)\}
  \cup S_1\cup S_2 \cup S_3 \cup S_4$.
  It is straightforward to check that the target set $S$ can influence
  all vertices in $V(G)\setminus S$ by using the convinced sequence
  $\alpha=\alpha_1\sqcup \alpha_2 \sqcup\alpha_3\sqcup \alpha_4
  \sqcup\alpha_5\sqcup \alpha_6$
  (see Figure 10 in Appendix for a graphical illustration of this
  convinced sequence $\alpha$), where
    \begin{description}
    \item[$\alpha_1=$] $\sqcup_{j=0}^{s-1} [(1,4+3j), (2, 3+3j)]$,
    \item[$\alpha_2=$] $\sqcup_{i=0}^{t-2} [(7+4i,1),(7+4i,2),(5+4i,1),(5+4i,2),(4+4i,2)]$,
    \item[$\alpha_3=$] $\sqcup_{j=0}^{s-1}\sqcup_{i=0}^{t-2}
                     [(4+4i,3+3j), (5+4i,5+3j), (6+4i,3+3j), (7+4i,5+3j)]$,
    \item[$\alpha_4=$] $[(m-1,2),(m,1)]$,
    \item[$\alpha_5=$] $[(3,1),(2,2),(1,2),(1,1)]$, and
    \item[$\alpha_6=$] $\sqcup_{j=0}^{s-1}
   ([(m,3+3j), (m-1,3+3j)]\sqcup
    [(m-1,4+3j),(m-2,4+3j),(m-3,4+3j),\ldots, (3,4+3j)]\sqcup
    [(3,5+3j), (2,5+3j), (1,5+3j), (m,5+3j)])$.
    \end{description}
    Therefore $\mbox{\rm min-seed}(C_m \oslash C_n,3)\leq
    |S|={mn\over 3}+{m\over 12}+{1\over 4}$.

  {\bf Case 4.} $r=2$ and $s$ is even.
  In this case, let $S_1$, $S_2$, $S_3$ and $S_4$ denote the sets
  $\cup_{j=0}^{s-1}\{(1, 3+3j), (2, 4+3j), (3, 3+3j), (4, 5+3j), (5, 3+3j)\}$,
  $\cup_{i=0}^{t-3}\{(6+4i,1), (8+4i,1), (8+4i,2)\}$,
  $\cup_{j=0}^{s-1}\cup_{i=0}^{t-3} \{(6+4i,5+3j), (7+4i, 3+3j), (8+4i, 5+3j), (9+4i, 3+3j)\}$
  and
  $\cup_{j=0}^{s-1}\{(m-4,5+3j), (m-3, 4+3j), (m-2, 3+3j), (m-1, 5+3j), (m,4+3j)\}$,
  respectively.
  Let $S=\{(2,1), (3,2), (4,1), (5,2), (m-4,1), (m-3,2), (m-1,1), (m,2)\}
  \cup S_1\cup S_2 \cup S_3 \cup S_4$.
  It can readily be checked that the target set $S$ can influence
  all vertices in $V(G)\setminus S$ by using the convinced sequence
  $\alpha=\alpha_1\sqcup \alpha_2 \sqcup\alpha_3\sqcup \alpha_4
  \sqcup\alpha_5\sqcup\alpha_6\sqcup\alpha_7\sqcup\alpha_8
  \sqcup\alpha_9\sqcup\alpha_{10}$
  (see Figure 11 in Appendix for a graphical illustration of this
  convinced sequence $\alpha$), where
  \begin{description}
    \item[$\alpha_1=$] $\sqcup_{j=0}^{s-1} [(1,4+3j), (2, 3+3j)]$,
    \item[$\alpha_2=$] $[(4,2),(5,1)]$,
    \item[$\alpha_3=$] $\sqcup_{i=0}^{t-3} [(9+4i,1),(9+4i,2),(7+4i,1),(7+4i,2),(6+4i,2)]$,
    \item[$\alpha_4=$] $\sqcup_{j=0}^{s-1} [(4,3+3j), (5,5+3j)]$,
    \item[$\alpha_5=$] $\sqcup_{j=0}^{s-1}\sqcup_{i=0}^{t-3}
    [(6+4i,3+3j), (7+4i,5+3j), (8+4i,3+3j), (9+4i,5+3j)]$,
    \item[$\alpha_6=$] $[(m-4,2),(m-3,1)]$,
    \item[$\alpha_7=$] $\sqcup_{k=0}^{s-2 \over 2}
   ([(m-3,3+6k), (m-4,3+6k)]\sqcup
    [(m-4,4+6k),(m-5,4+6k),(m-6,4+6k),\ldots, (3,4+6k)]\sqcup
    [(3,5+6k), (2,5+6k), (1,5+6k), (m,5+6k)]\sqcup
    [(m,6+6k),(m-1,6+6k),(m-1,7+6k),(m-2,7+6k),(m-2,8+6k),(m-3,8+6k)])$,
    \item[$\alpha_8=$] $[(m-2,1),(m-2,2),(m-1,2),(m,1)]$,
    \item[$\alpha_9=$]
    $[(3,1),(2,2),(1,2),(1,1)]$, and
    \item[$\alpha_{10}=$] $\sqcup_{k=0}^{s-2 \over 2}
   ([(m,3+6k), (m-1,3+6k), (m-1,4+6k), (m-2,4+6k), (m-2,5+6k), (m-3,5+6k), (m-3,6+6k), (m-4,6+6k)]\sqcup
    [(m-4,7+6k),(m-5,7+6k),(m-6,7+6k),\ldots,(3,7+6k)]\sqcup
    [(3,8+6k),(2,8+6k),(1,8+6k),(m,8+6k)])$.
  \end{description}
    It follows that $\mbox{\rm min-seed}(C_m \oslash C_n,3)\leq
    |S|={mn\over 3}+{m\over 12}+{1\over 2}$.

  {\bf Case 5.} $r=2$ and $s$ is odd.
  In this case, let $S_1$, $S_2$, $S_3$ and $S_4$ denote the sets
  $\cup_{j=0}^{s-1}\{(1, 3+3j), (2, 4+3j), (3, 3+3j), (4, 5+3j), (5, 3+3j)\}$,
  $\cup_{i=0}^{t-3}\{(6+4i,1), (8+4i,1), (8+4i,2)\}$,
  $\cup_{j=0}^{s-1}\cup_{i=0}^{t-3} \{(6+4i,5+3j), (7+4i, 3+3j), (8+4i, 5+3j), (9+4i, 3+3j)\}$
  and
  $\cup_{j=0}^{s-1}\{(m-4,5+3j), (m-3, 4+3j), (m-2, 3+3j), (m-1, 5+3j), (m,4+3j)\}$,
  respectively.
  Let $S=\{(2,1), (3,2), (4,1), (5,2), (m-4,1), (m-3,2), (m-2,1), (m-1,1), (m,2)\}
  \cup S_1\cup S_2 \cup S_3 \cup S_4$.
  It is straightforward to check that the target set $S$ can influence
  all vertices in $V(G)\setminus S$ by using the convinced sequence
  $\alpha=\alpha_1\sqcup \alpha_2 \sqcup\alpha_3\sqcup \alpha_4
  \sqcup\alpha_5\sqcup \alpha_6 \sqcup\alpha_7\sqcup \alpha_8
  \sqcup\alpha_9\sqcup\alpha_{10}\sqcup\alpha_{11}$
  (see Figure 12 in Appendix for a graphical illustration of this
  convinced sequence $\alpha$), where
    \begin{description}
    \item[$\alpha_1=$] $\sqcup_{j=0}^{s-1} [(1,4+3j), (2, 3+3j)]$,
    \item[$\alpha_2=$] $[(4,2),(5,1)]$,
    \item[$\alpha_3=$] $\sqcup_{i=0}^{t-3} [(9+4i,1),(9+4i,2),(7+4i,1),(7+4i,2),(6+4i,2)]$,
    \item[$\alpha_4=$] $\sqcup_{j=0}^{s-1} [(4,3+3j), (5,5+3j)]$,
    \item[$\alpha_5=$] $\sqcup_{j=0}^{s-1}\sqcup_{i=0}^{t-3}
    [(6+4i,3+3j), (7+4i,5+3j), (8+4i,3+3j), (9+4i,5+3j)]$,
    \item[$\alpha_6=$] $[(m-4,2),(m-3,1),(m-2,2),(m-1,2),(m,1)]$,
    \item[$\alpha_7=$] $[(3,1),(2,2),(1,2),(1,1)]$,
    \item[$\alpha_8=$] $[(m-3,3), (m-4,3)]\sqcup
    [(m-4,4),(m-5,4),(m-6,4),\ldots, (3,4)]\sqcup
    [(3,5),$\\$ (2,5), (1,5), (m,5)]$,
    \item[$\alpha_9=$] $[(m,3),(m-1,3),(m-1,4),(m-2,4),(m-2,5),(m-3,5)]$,
    \item[$\alpha_{10}=$] $\sqcup_{k=0}^{s-3 \over 2}
   ([(m-3,6+6k), (m-4,6+6k)]\sqcup
    [(m-4,7+6k),(m-5,7+6k),(m-6,7+6k),\ldots, (3,7+6k)]\sqcup
    [(3,8+6k), (2,8+6k), (1,8+6k), (m,8+6k)]\sqcup
    [(m,9+6k),(m-1,9+6k),(m-1,10+6k),(m-2,10+6k),(m-2,11+6k),(m-3,11+6k)])$,
    and
    \item[$\alpha_{11}=$] $\sqcup_{k=0}^{s-3 \over 2}
   ([(m,6+6k), (m-1,6+6k), (m-1,7+6k), (m-2,7+6k), (m-2,8+6k), (m-3,8+6k), (m-3,9+6k), (m-4,9+6k)]\sqcup
    [(m-4,10+6k),(m-5,10+6k),(m-6,10+6k),\ldots,(3,10+6k)]\sqcup
    [(3,11+6k),(2,11+6k),(1,11+6k),(m,11+6k)])$.
    \end{description}
    We conclude that $\mbox{\rm min-seed}(C_m \oslash C_n,3)\leq
    |S|={mn\over 3}+{m\over 12}+{3\over 2}$.

  {\bf Case 6.} $r=3$.
  In this case, let $S_1$, $S_2$, $S_3$ and $S_4$ denote the sets
  $\cup_{j=0}^{s-1}\{(1, 3+3j), (2, 4+3j), (3, 3+3j), (4, 5+3j), (5, 3+3j)\}$,
  $\cup_{i=0}^{t-2}\{(6+4i,1), (8+4i,1), (8+4i,2)\}$,
  $\cup_{j=0}^{s-1}\cup_{i=0}^{t-2} \{(6+4i,5+3j), (7+4i, 3+3j), (8+4i, 5+3j), (9+4i, 3+3j)\}$
  and
  $\cup_{j=0}^{s-1}\{(m-1, 5+3j), (m,4+3j)\}$,
  respectively.
  Let $S=\{(2,1), (3,2), (4,1), (5,2), (m-1,1), (m,2)\}
  \cup S_1\cup S_2 \cup S_3 \cup S_4$.
  It is straightforward to check that the target set $S$ can influence
  all vertices in $V(G)\setminus S$ by using the convinced sequence
  $\alpha=\alpha_1\sqcup \alpha_2 \sqcup\alpha_3\sqcup \alpha_4
  \sqcup\alpha_5\sqcup \alpha_6\sqcup \alpha_7\sqcup \alpha_8$
  (see Figure 13 in Appendix for a graphical illustration of this
  convinced sequence $\alpha$), where
    \begin{description}
    \item[$\alpha_1=$] $\sqcup_{j=0}^{s-1} [(1,4+3j), (2, 3+3j)]$,
    \item[$\alpha_2=$] $[(4,2),(5,1)]$,
    \item[$\alpha_3=$] $\sqcup_{i=0}^{t-2} [(9+4i,1),(9+4i,2),(7+4i,1),(7+4i,2),(6+4i,2)]$,
    \item[$\alpha_4=$] $\sqcup_{j=0}^{s-1} [(4,3+3j), (5,5+3j)]$,
    \item[$\alpha_5=$] $\sqcup_{j=0}^{s-1}\sqcup_{i=0}^{t-2}
    [(6+4i,3+3j), (7+4i,5+3j), (8+4i,3+3j), (9+4i,5+3j)]$,
    \item[$\alpha_6=$] $[(m-1,2),(m,1)]$,
    \item[$\alpha_7=$] $[(3,1),(2,2),(1,2),(1,1)]$, and
    \item[$\alpha_8=$] $\sqcup_{j=0}^{s-1}
   ([(m,3+3j), (m-1,3+3j)]\sqcup
    [(m-1,4+3j),(m-2,4+3j),(m-3,4+3j),\ldots, (3,4+3j)]\sqcup
    [(3,5+3j), (2,5+3j), (1,5+3j), (m,5+3j)])$.
    \end{description}
    Therefore $\mbox{\rm min-seed}(C_m \oslash C_n,3)\leq
    |S|={mn\over 3}+{m\over 12}+{3\over 4}$. This completes the proof of the
    theorem.
  \end{proof}

  %
  %
  %
  %
  %
  %
  \begin{theorem}
  \label{TC-3txn}
   If  $m\equiv 0 \pmod{3}$ and $n\geq 2$,  then
  $\mbox{\rm min-seed}(C_m \oslash C_n,3)={mn\over 3}+1$.
  \end{theorem}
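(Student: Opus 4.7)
The lower bound follows immediately from Theorem \ref{bounds}(a): when $3\mid m$, $\lceil(mn+1)/3\rceil=mn/3+1$, so $\mbox{min-seed}(C_m\oslash C_n,3)\geq mn/3+1$. What remains is to construct a target set $S$ of size exactly $mn/3+1$ that influences all of $V(G)$.

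Writing $m=3t$, my plan is to partition the rows into $t$ consecutive triples $R_k=\{3k+1,3k+2,3k+3\}$ for $k=0,\ldots,t-1$, place $n$ seeds in each band $R_k$, and add one trigger seed, for a total of $tn+1=mn/3+1$. Within a band the seeds follow a column-alternating ``zigzag'' pattern modeled on the proof of Theorem \ref{TC-mx3}: seeds at $(3k+1,j)$ for $j$ of one parity and at $(3k+2,j)$ for $j$ of the other parity, with the specific choice of parity possibly depending on $k$ so as to create alternation between adjacent bands. The trigger seed is then placed at a carefully chosen vertex in one of the remaining rows, e.g.\ $(3,1)$.

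The convinced sequence would propagate in two stages. First, each non-seeded row $3k+3$ is activated using its two in-band seed neighbors in row $3k+2$ together with the seed in row $3(k+1)+1$ of the next band---precisely the local mechanism used in Theorem \ref{TC-mx3}, which carries over because consecutive bands share the edges $(3k+3,j)(3(k+1)+1,j)$. Second, the trigger seed initiates a cascade that sweeps along the cordalis wrap-around edge $(3t,n)(1,1)$ and fills in the remaining region near the seam. The explicit convinced sequence would be written out case by case, in the same style as the sequences $\alpha_1\sqcup\alpha_2\sqcup\cdots$ displayed in Theorem \ref{TC-mx3s-a-b-c}.

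The main obstacle, visible already in small examples such as $C_6\oslash C_3$, is that a uniformly repeated band pattern leaves a pocket of vertices near the cordalis seam in a deadlock where every candidate vertex has only two active neighbors. The fix is to alternate the within-band pattern between consecutive bands so that the cordalis shift $(i,n)\mapsto(i+1,1)$ effectively ``moves the seed pattern by one row'' at the seam and supplies the missing third active neighbor. I expect the argument to split into a short list of subcases indexed by the parities of $n$ and of $t$; because the divisibility $3\mid m$ is stronger than the hypotheses used in Theorems \ref{TC-mx3s+1} and \ref{TC-mx3s+2}, each subcase should admit a clean construction achieving the exact bound $mn/3+1$ rather than the weaker upper bounds obtained there, with the bulk of the work being the routine but careful verification that every newly activated vertex has at least three already-active neighbors at the time of its activation.
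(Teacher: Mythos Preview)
Your high-level strategy matches the paper's: the lower bound comes from Theorem~\ref{bounds}(a), the $m=3t$ rows are grouped into $t$ bands of three, each band carries $n$ seeds in a zigzag pattern, one extra trigger seed is added, and the construction splits into cases (the paper splits only on the parity of $n$, not of $t$).

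However, two specific points in your proposal are wrong and would block the argument as written. First, the sentence ``each non-seeded row $3k+3$ is activated using its two in-band seed neighbors in row $3k+2$'' is impossible: a vertex $(3k+3,j)$ has exactly one neighbor in row $3k+2$, namely $(3k+2,j)$. Second, and more seriously, the seeding you describe---all $n$ seeds of band $k$ placed in rows $3k+1$ and $3k+2$, with the single global trigger in row $3$---does not percolate. Even with the band parities alternated, after the zigzag rows fill in, each empty row $3k+3$ has both vertical neighbors active and so needs just one horizontal trigger; but a trigger at $(3,1)$ only ignites row $3$. The cordalis wrap does not carry the ignition from one gap row to the next: for instance (with $n$ even, row $5$ seeded at odd columns) the vertices $(5,n)$ and $(6,1)$ sit in a two-vertex deadlock, each waiting for the other, because $(5,n)$ has only the seeds $(5,n-1),(4,n)$ and its cordalis neighbor is $(6,1)$, while $(6,1)$ has only the seeds $(5,1),(7,1)$ and its cordalis neighbor is $(5,n)$.

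The paper's construction resolves this not by a cleverer alternation but by \emph{redistributing} the $n$ seeds within each band so that the third row also receives a seed. For $n$ even, a typical band (rows $4{+}3i,5{+}3i,6{+}3i$) carries seeds at all even columns of row $4{+}3i$, the odd columns $\ge 3$ of row $5{+}3i$, and the single vertex $(6{+}3i,1)$; this last seed both triggers the horizontal cascade along row $6{+}3i$ and, via the cordalis edge $(5{+}3i,n)(6{+}3i,1)$, breaks the deadlock at $(5{+}3i,n)$. For $n$ odd the paper instead uses a diagonal pattern $(1{+}3i,1),(2{+}3i,2),(3{+}3i,3)$ through the first three columns together with a zigzag in columns $4,\ldots,n$. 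So your band-and-zigzag framework is right, but the key device you are missing is the per-band trigger seed in the otherwise empty row.
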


  \begin{proof}
  Let $G=C_m \oslash C_n$ and $m=3t$. The proof is divided into two
  cases, according to the parity of $n$.

  {\bf Case 1.} $n$ is even.
  Denote by $S_1$, $S_2$ and $S_3$ the sets
  $\cup_{j=0}^{n-2\over 2}\{(1, 1+2j), (2, 2+2j)\}$,
  $\cup_{i=0}^{t-2}\{(4+3i, 2), (6+3i,1)\}$
  and
  $\cup_{j=0}^{n-4\over 2}\cup_{i=0}^{t-2} \{(4+3i, 4+2j),(5+3i, 3+2j)\}$,
  respectively.
  Let $S=S_1\cup S_2\cup S_3 \cup \{(3,1)\}$.
  It is straightforward to check that the target set $S$ can influence
  all vertices in $V(G)\setminus S$ by using the convinced sequence
  $\alpha=\alpha_1\sqcup \alpha_2 \sqcup\alpha_3\sqcup \alpha_4$, where
     $\alpha_1 = \sqcup_{j=0}^{n-2\over 2} [(2,1+2j), (1, 2+2j)]$,
     $\alpha_2 = \sqcup_{i=0}^{t-2}\sqcup_{j=0}^{n-4\over 2}
                     [(4+3i,3+2j), (5+3i, 4+2j)]$,
     $\alpha_3 = [(3,2),(3,3),(3,4),\ldots, (3,n)]$,
     and
     $\alpha_4 = \sqcup_{i=0}^{t-2}
   ([(4+3i,1),(5+3i,1),(5+3i,2)]\sqcup
    [(6+3i,2), (6+3i,3), (6+3i,4),\ldots, (6+3i,n)])$
    (see Figure 14 in Appendix for a graphical illustration
   of this convinced sequence $\alpha$).
    Therefore $\mbox{\rm min-seed}(C_m \oslash C_n,3)\leq
    |S|={mn\over 3}+1$. By Theorem \ref{bounds}(a), we conclude that
    $\mbox{\rm min-seed}(C_m \oslash C_n,3)=
    {mn\over 3}+1$.

      {\bf Case 2.} $n$ is odd.
      By Theorem \ref{TC-mx3},
      it suffices to consider only the case when $n\geq 5$.
  Denote by $S_1$, $S_2$ and $S_3$ the sets
  $\cup_{i=0}^{t-1}\{(1+3i, 1), (2+3i, 2), (3+3i, 3)\}$,
  $\cup_{j=0}^{n-5\over 2}\{ (1, 5+2j),(2, 4+2j)\}$
  and
  $\cup_{j=0}^{n-5\over 2}\cup_{i=0}^{t-2} \{(4+3i,4+2j), (5+3i, 5+2j)\}$,
  respectively.
  Let $S=S_1\cup S_2\cup S_3 \cup \{(1,3)\}$.
  It is straightforward to check that the target set $S$ can influence
  all vertices in $V(G)\setminus S$ by using the convinced sequence
  $\alpha=\alpha_1\sqcup \alpha_2 \sqcup\alpha_3\sqcup \alpha_4$
  (see Figure 15 in Appendix for a graphical illustration
   of this convinced sequence $\alpha$), where
  \begin{description}
    \item[$\alpha_1=$] $\sqcup_{j=0}^{n-3\over 2} [(2,1+2j), (1, 2+2j)]$,
    \item[$\alpha_2=$] $\sqcup_{i=0}^{t-2}\sqcup_{j=0}^{n-7\over 2}
                     [(4+3i,5+2j), (5+3i, 6+2j)]$,
    \item[$\alpha_3=$]
    $\sqcup_{i=0}^{t-2}([(m-3i,n),(m-3i,n-1),\ldots,(m-3i,4)]
     \sqcup [(m-1-3i,4),(m-1-3i,3),(m-2-3i,3),(m-2-3i,2),
             (m-3i,2),(m-3i,1),(m-1-3i,1),(m-2-3i,n)])$, and
    \item[$\alpha_4=$] $[(3,2),(3,1),(2,n)]\sqcup[(3,n),(3,n-1),\ldots, (3,4)]$.
  \end{description}
    Therefore $\mbox{\rm min-seed}(C_m \oslash C_n,3)\leq
    |S|={mn\over 3}+1$, and hence by Theorem \ref{bounds}(a)
    we get
    $\mbox{\rm min-seed}(C_m \oslash C_n,3)=
    {mn\over 3}+1$. This completes the proof of the theorem.
  \end{proof}



 %
 %

\newpage

\noindent {\bf Appendix}: [Not for publication - for referees'
information only]

 \begin{center}
 \includegraphics[scale=0.6]{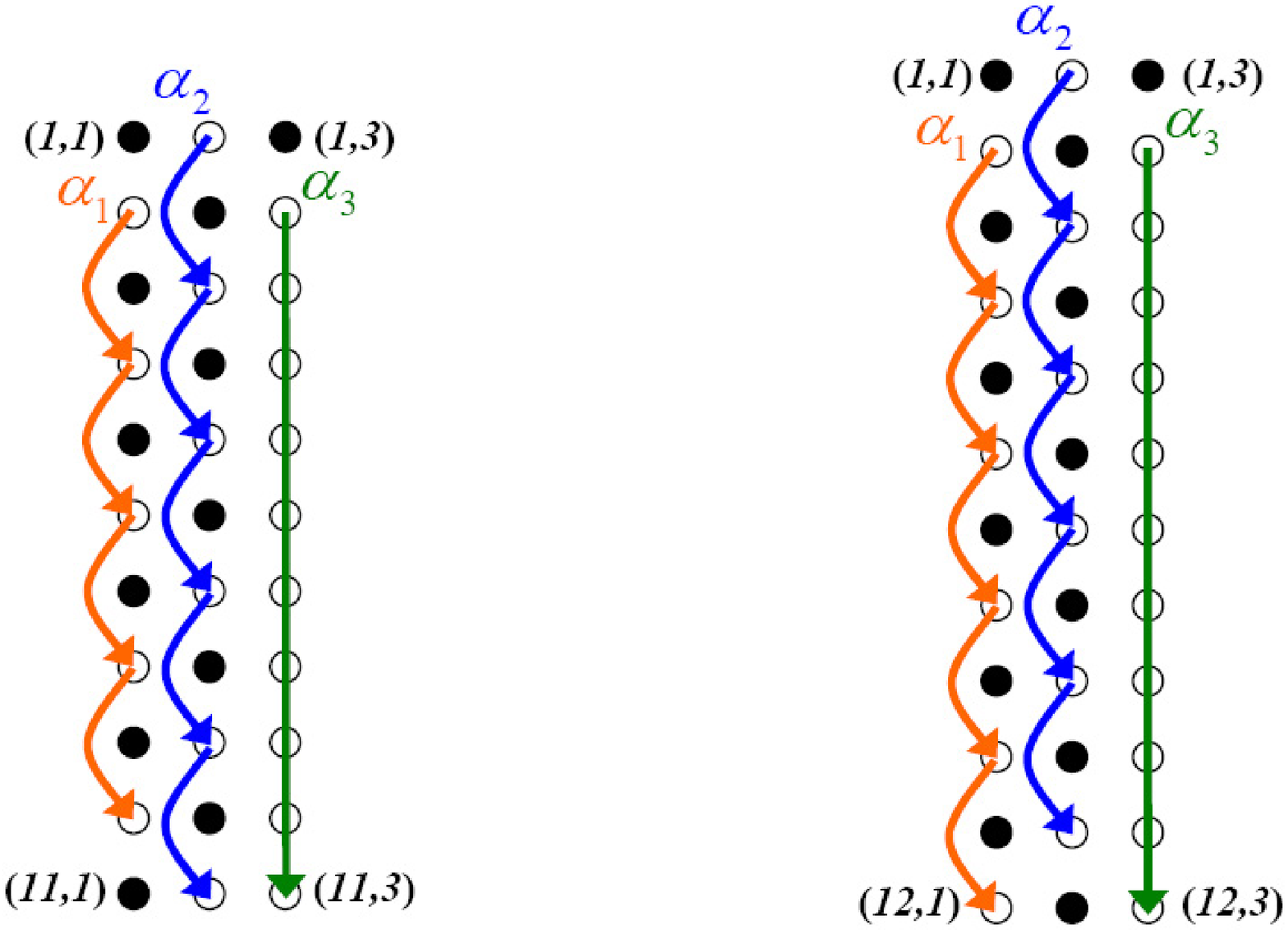}
 \begin{description}
    \item[Figure 1.] $\mbox{\rm min-seed}(C_{11} \oslash C_3,3)=12$ (left)
  and
  $\mbox{\rm min-seed}(C_{12} \oslash C_3,3)=13$ (right),
  where the target set $S$ is the set of all black vertices, and
  the convinced sequence $\alpha_1\sqcup \alpha_2\sqcup \alpha_3$ is
  illustrated by three colored directed paths.
 \end{description}
 \includegraphics[scale=0.6]{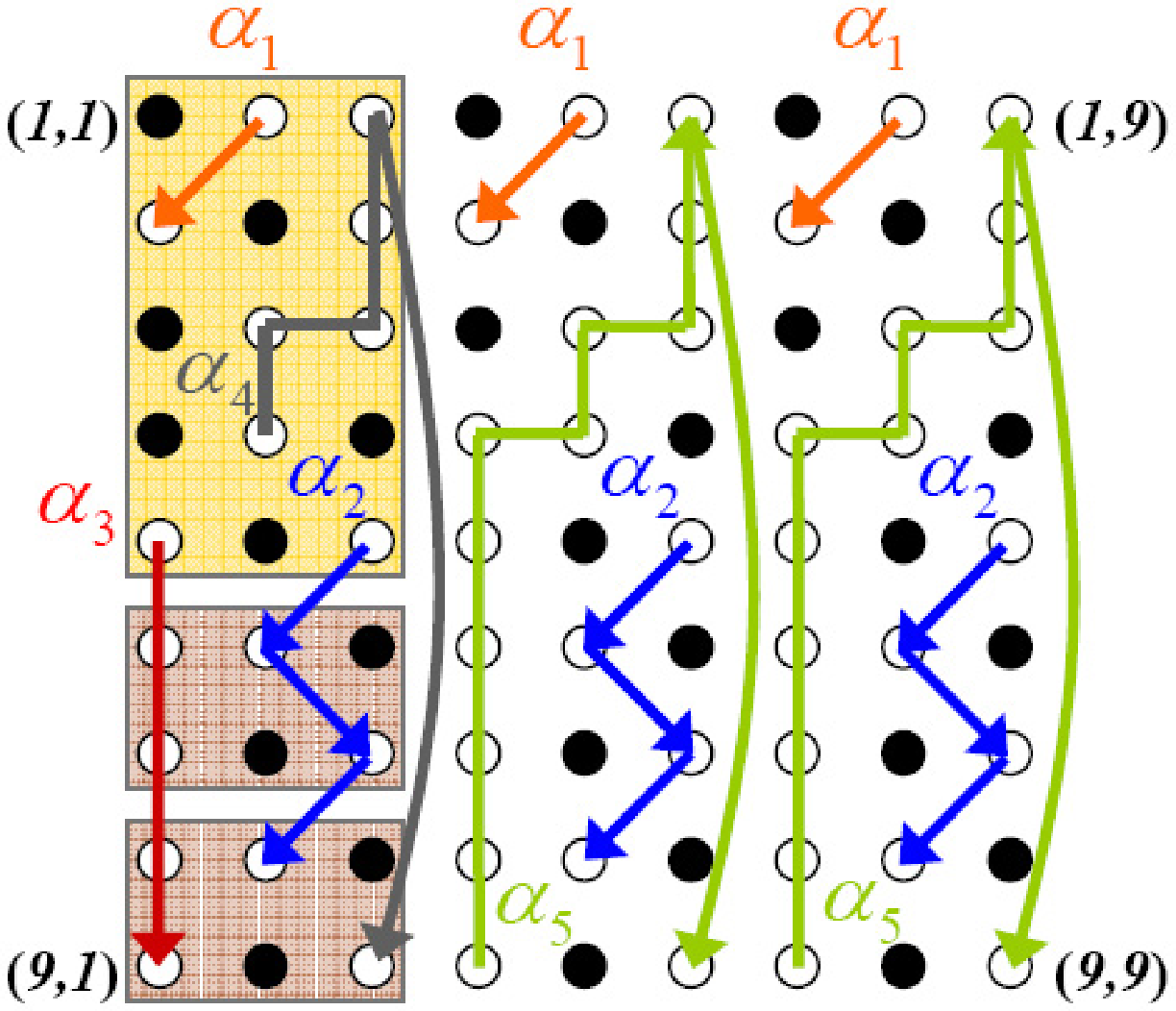}
 \begin{description}
    \item[Figure 2.]
  $\mbox{\rm min-seed}(C_9 \oslash C_{9},3)=28$.
 \end{description}
 \bigskip

 \includegraphics[scale=0.58]{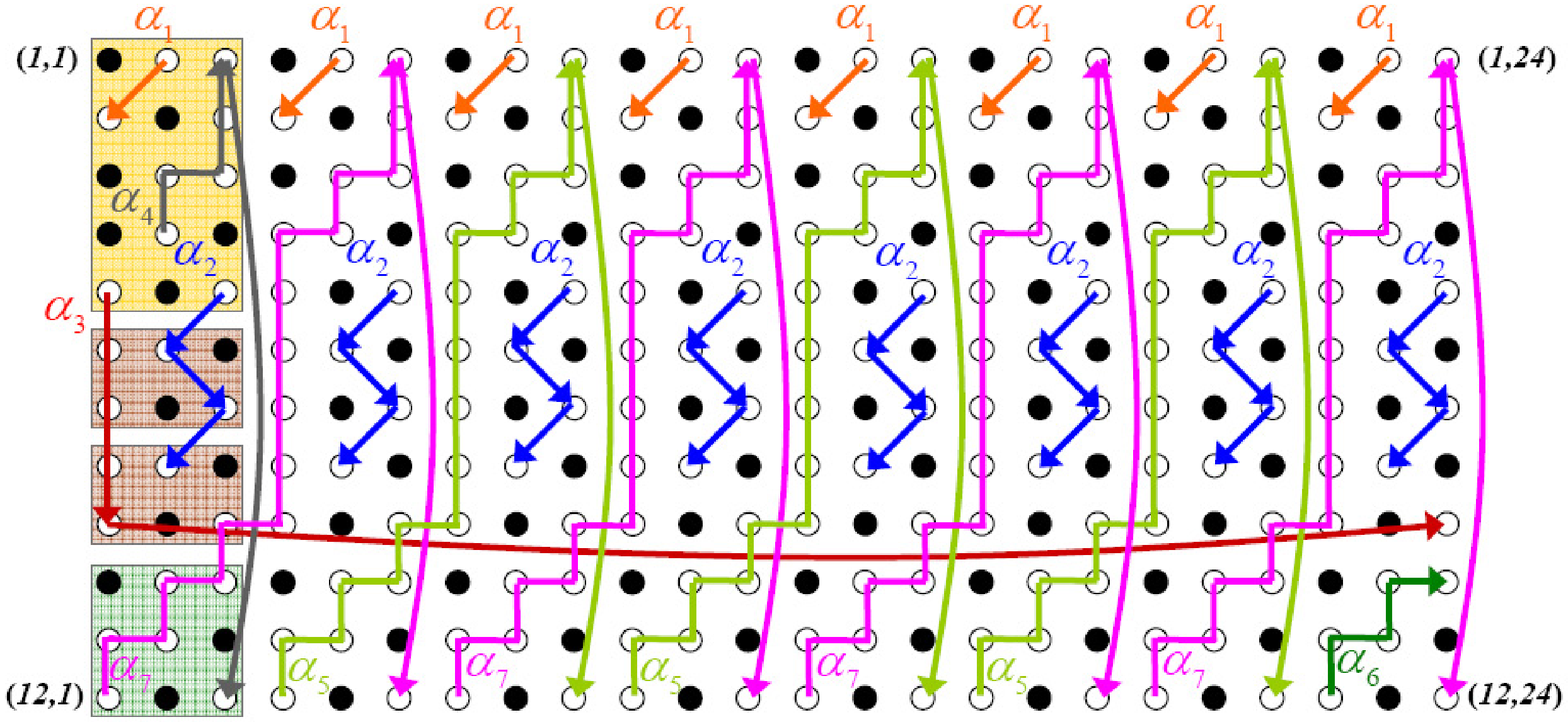}
 \begin{description}
    \item[Figure 3.]
  $\mbox{\rm min-seed}(C_{12} \oslash C_{24},3)=97$.
 \end{description}
 \bigskip

 \includegraphics[scale=0.65]{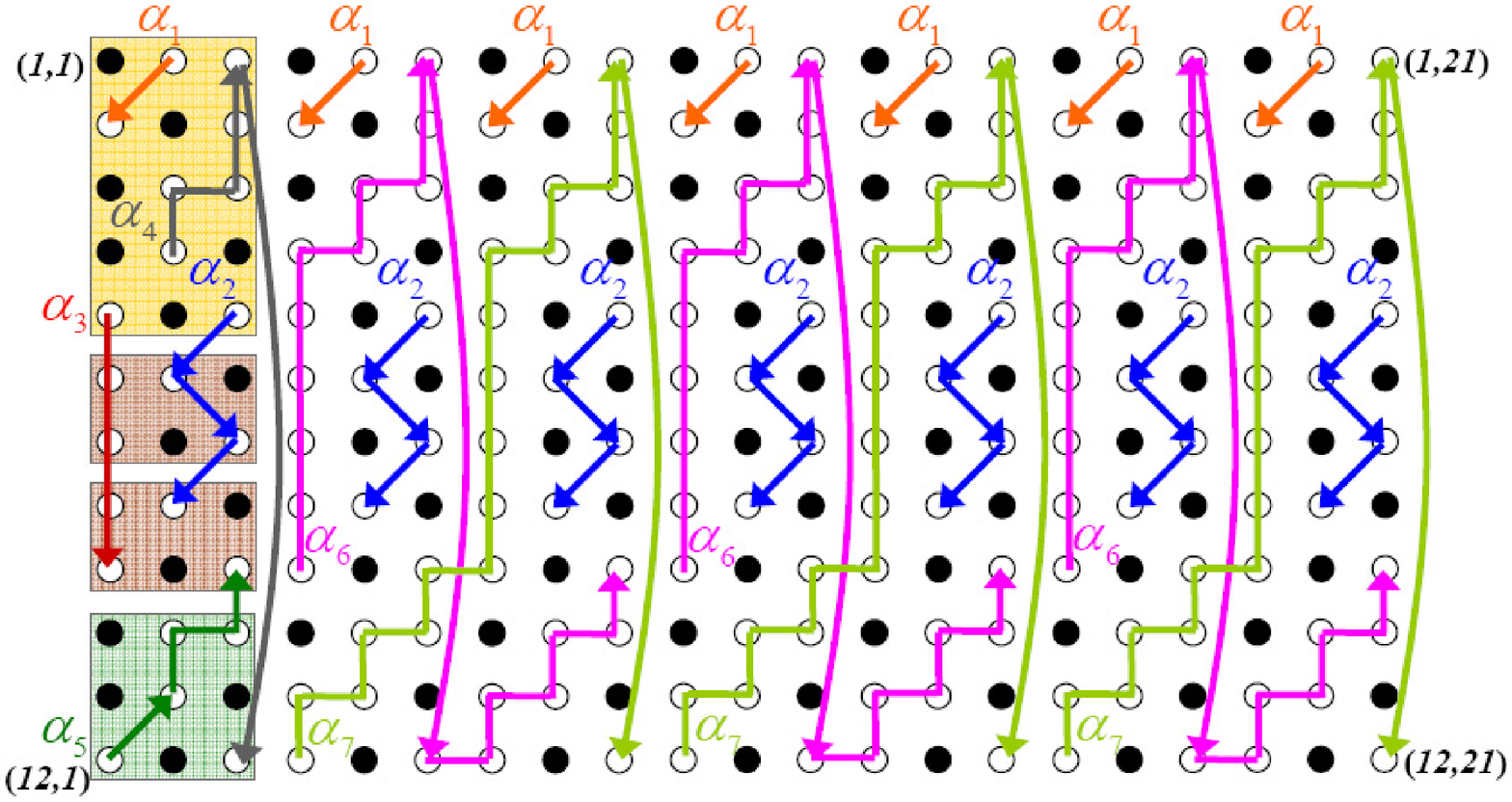}
 \begin{description}
    \item[Figure 4.]
  $\mbox{\rm min-seed}(C_{12} \oslash C_{21},3)\leq
    86$.
 \end{description}
 \bigskip

 \includegraphics[scale=0.55]{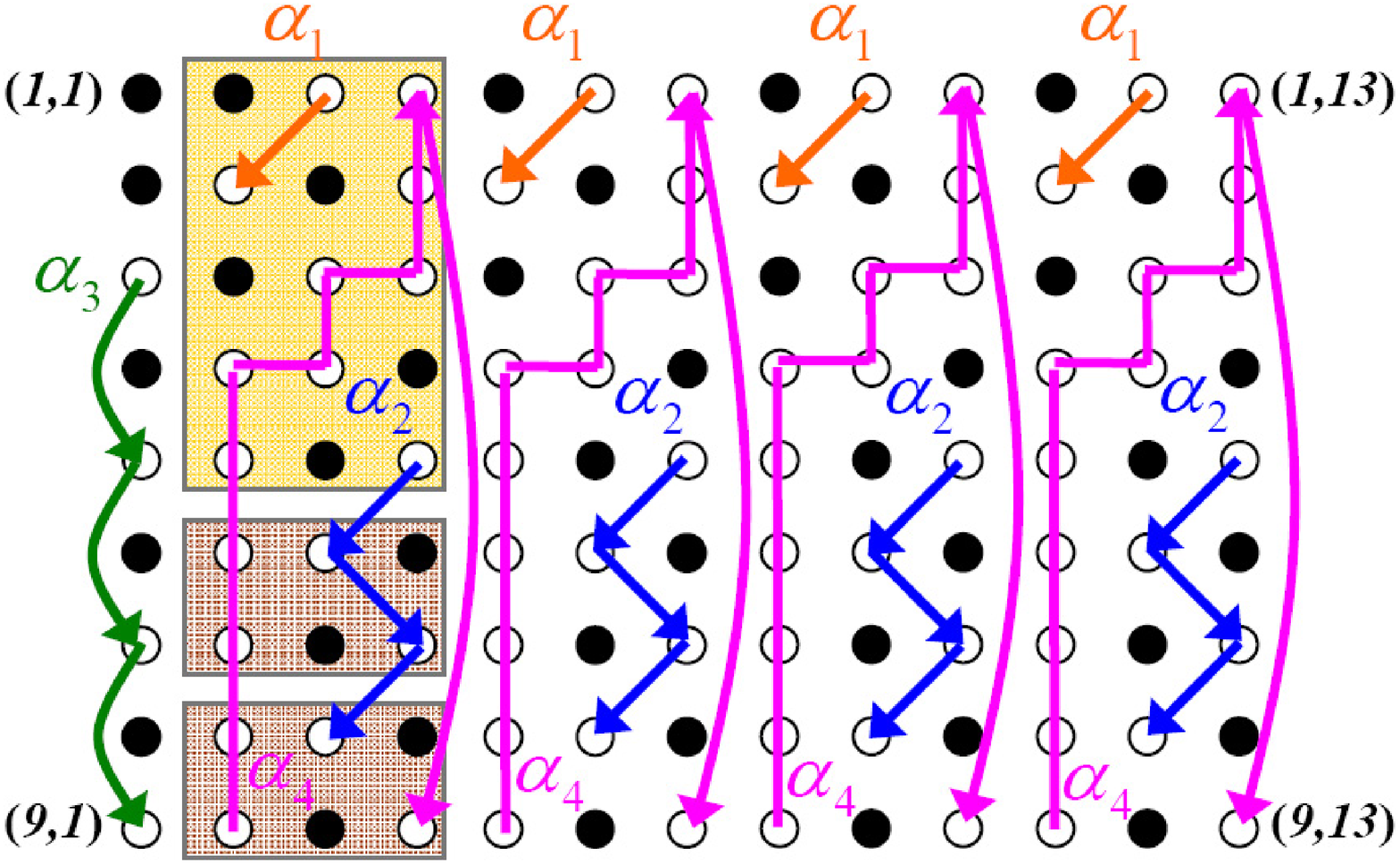}
 \begin{description}
    \item[Figure 5.]
  $\mbox{\rm min-seed}(C_{9} \oslash C_{13},3)\leq
    41$.
 \end{description}
 \bigskip

 \includegraphics[scale=0.5]{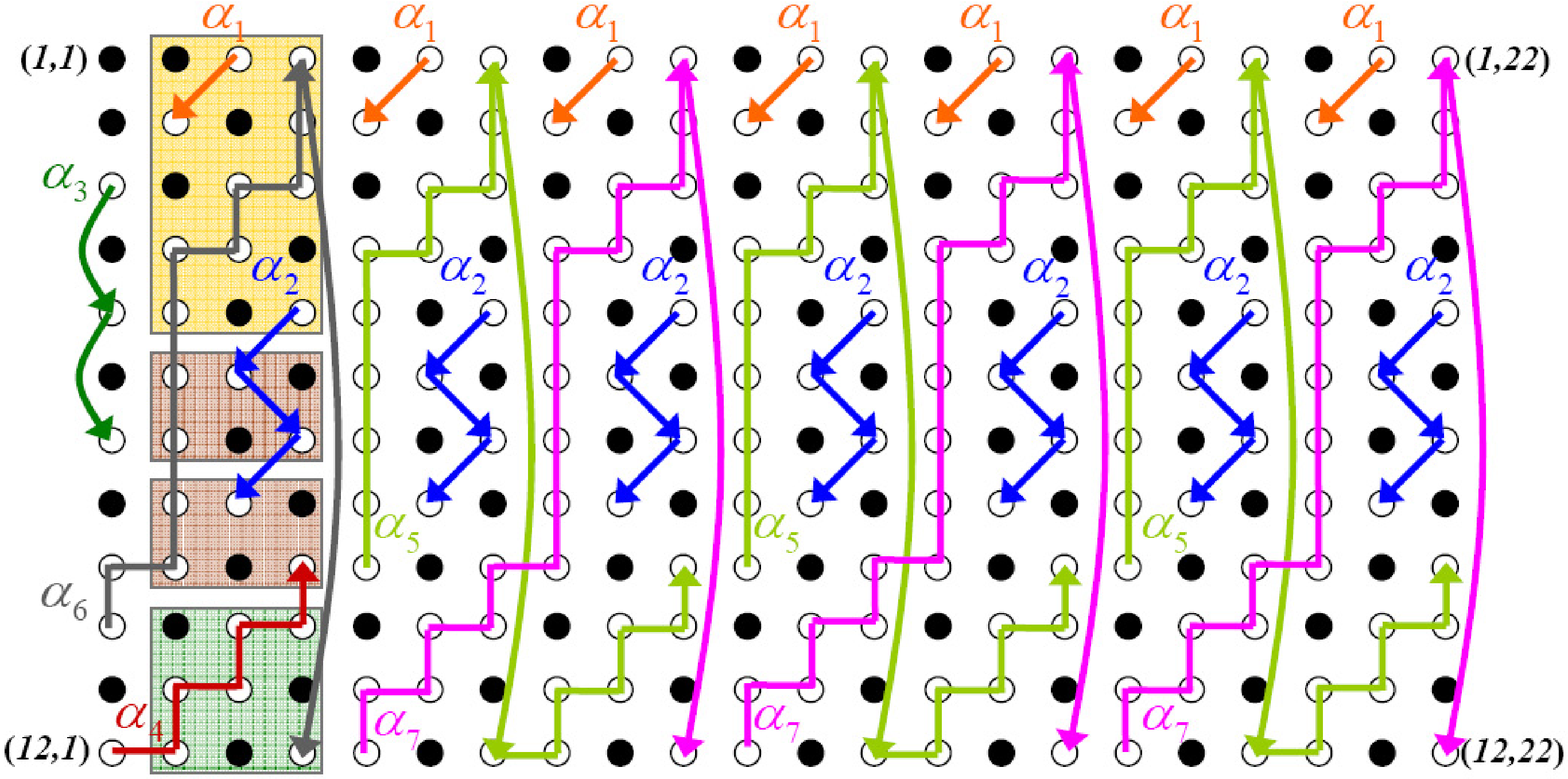}
 \begin{description}
    \item[Figure 6.]
  $\mbox{\rm min-seed}(C_{12} \oslash C_{22},3)\leq
    90$.
 \end{description}
 \bigskip

 \includegraphics[scale=0.43]{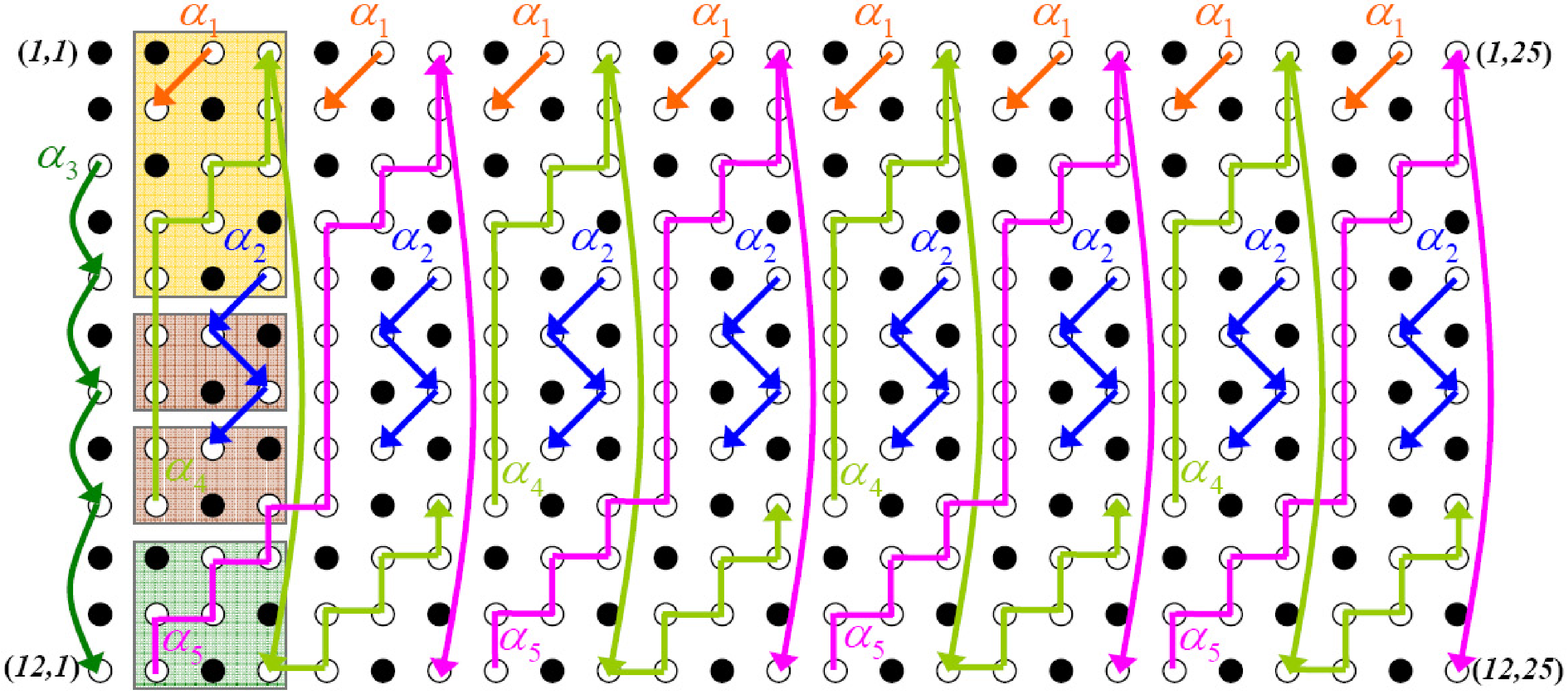}
 \begin{description}
    \item[Figure 7.]
  $\mbox{\rm min-seed}(C_{12} \oslash C_{25},3)\leq
    103$.
 \end{description}
 \bigskip

 \includegraphics[scale=0.62]{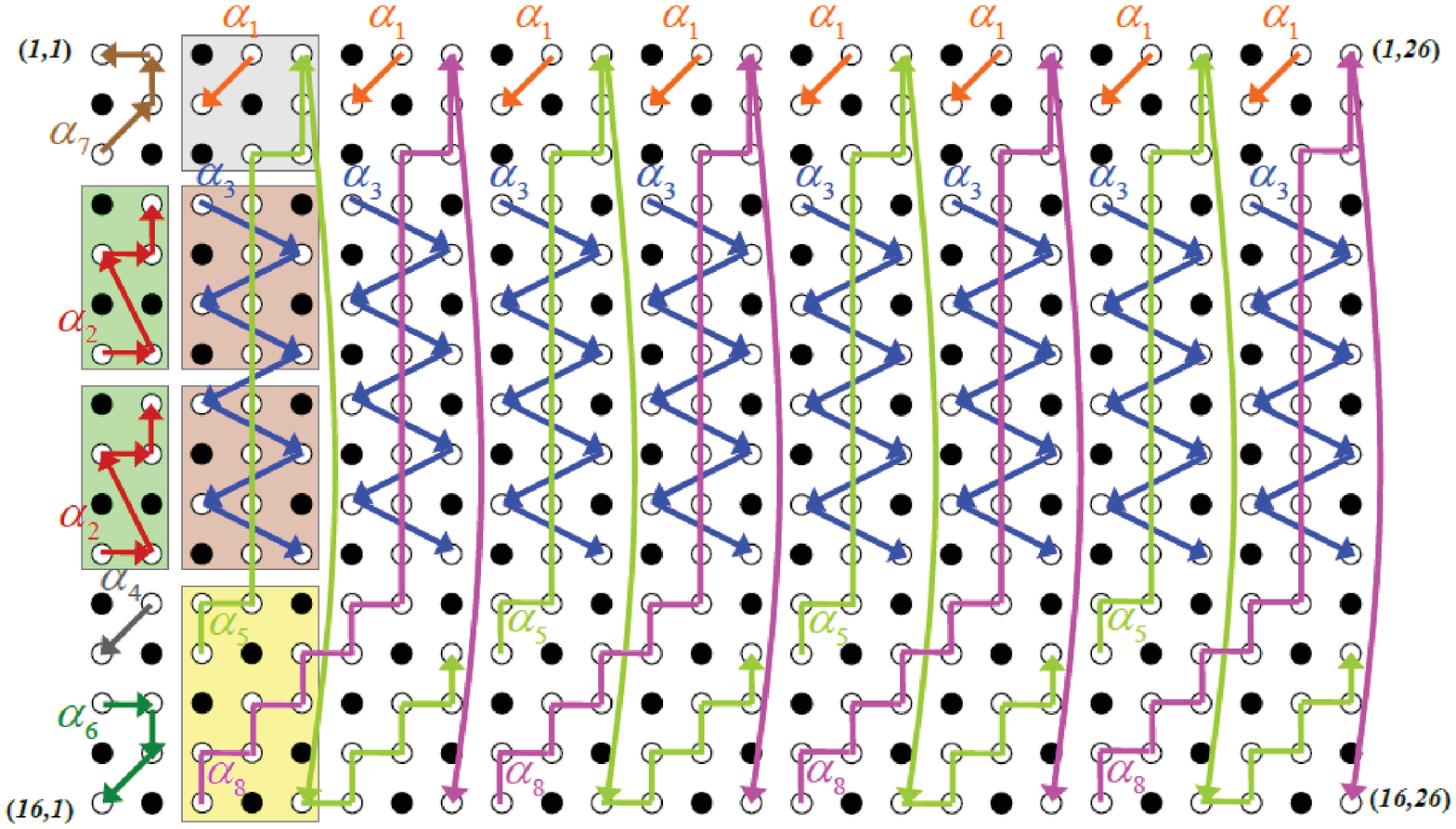}
 \begin{description}
    \item[Figure 8.]
  $\mbox{\rm min-seed}(C_{16} \oslash C_{26},3)\leq
    140$.
 \end{description}
 \bigskip

 \includegraphics[scale=0.64]{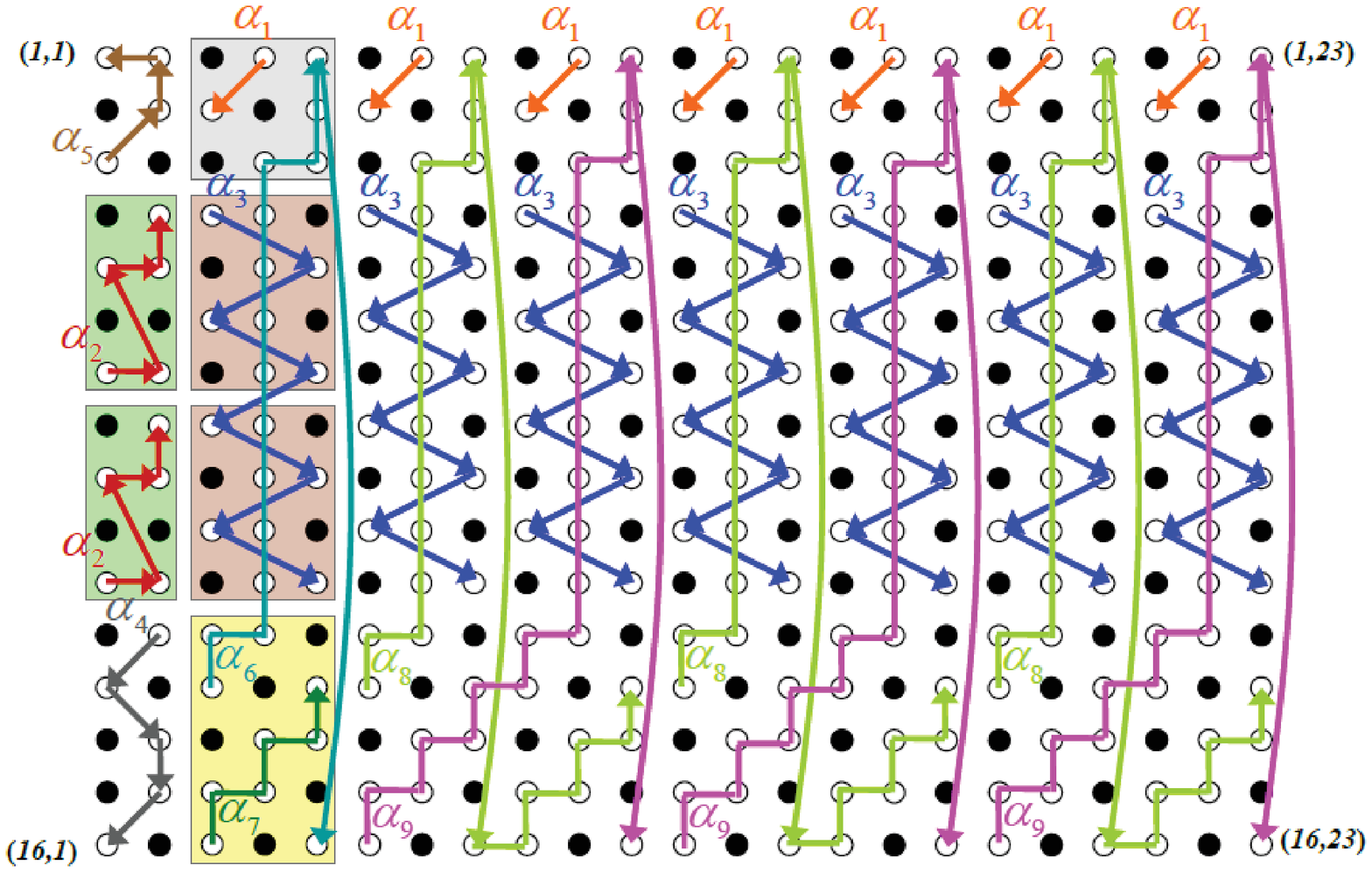}
 \begin{description}
    \item[Figure 9.]
  $\mbox{\rm min-seed}(C_{16} \oslash C_{23},3)\leq
    125$.
 \end{description}
 \bigskip

 \includegraphics[scale=0.7]{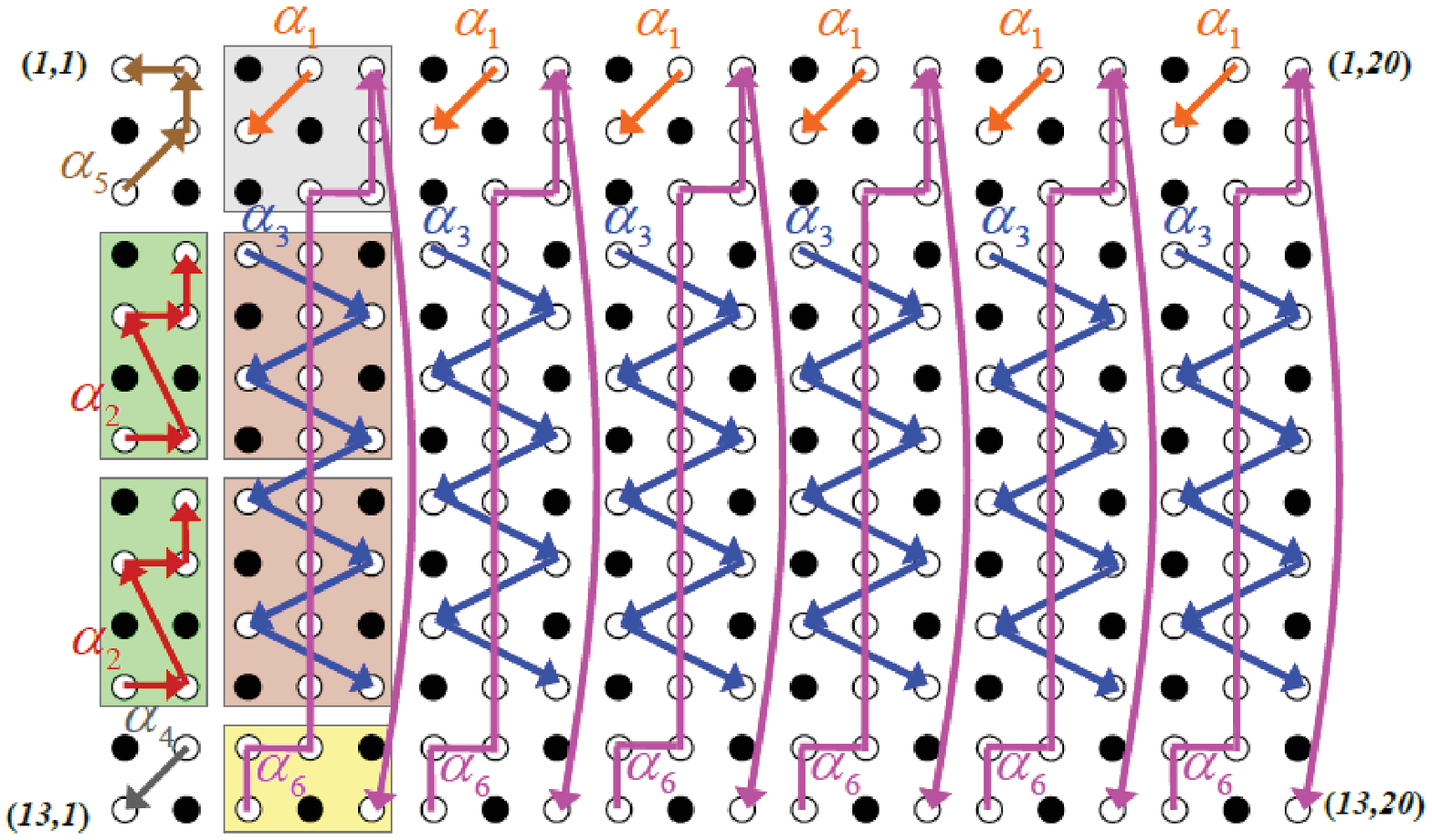}
 \begin{description}
    \item[Figure 10.]
  $\mbox{\rm min-seed}(C_{13} \oslash C_{20},3)\leq
    88$.
 \end{description}
 \bigskip

 \includegraphics[scale=0.6]{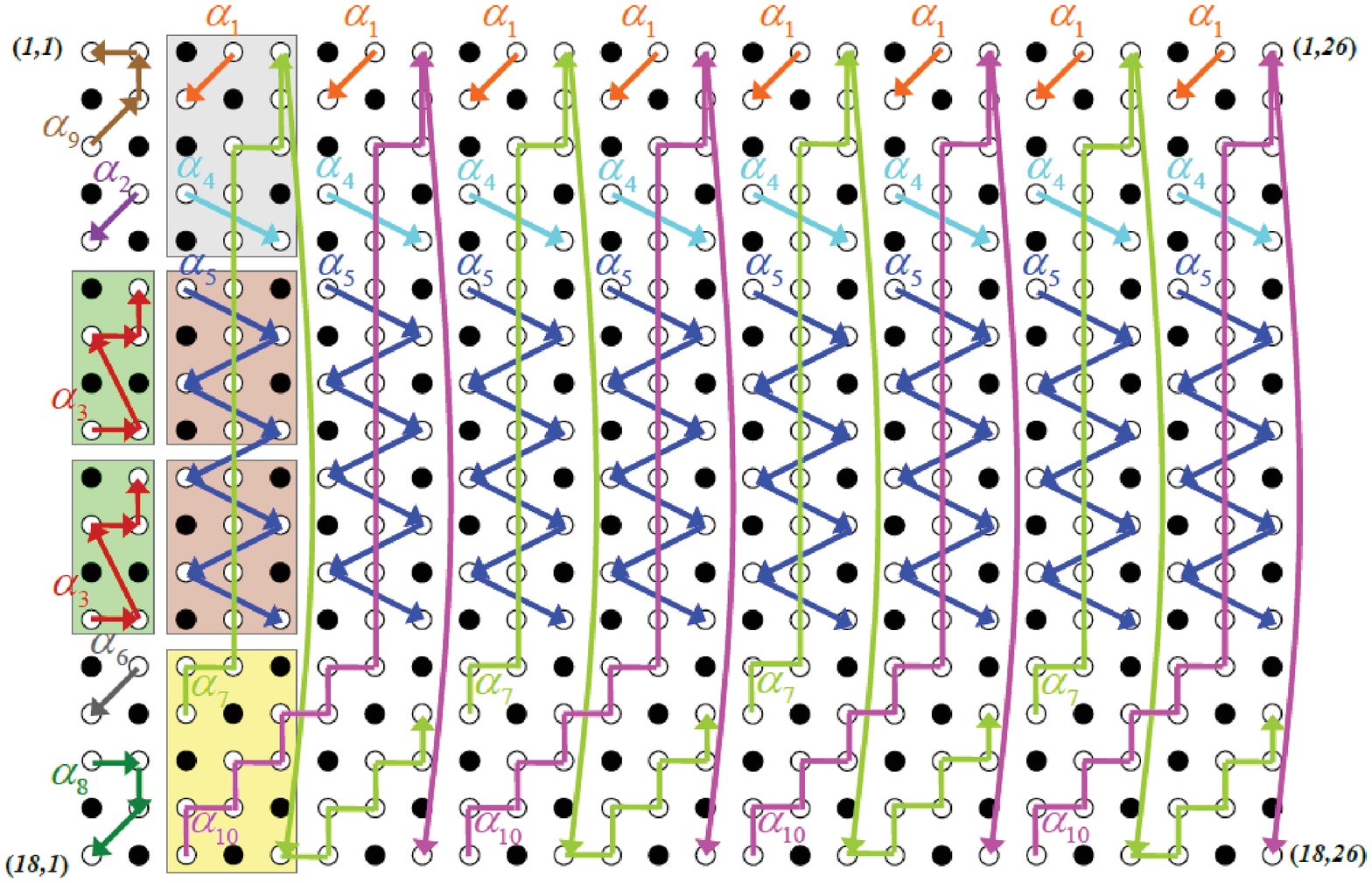}
 \begin{description}
    \item[Figure 11.]
  $\mbox{\rm min-seed}(C_{18} \oslash C_{26},3)\leq
    158$.
 \end{description}

 \includegraphics[scale=0.6]{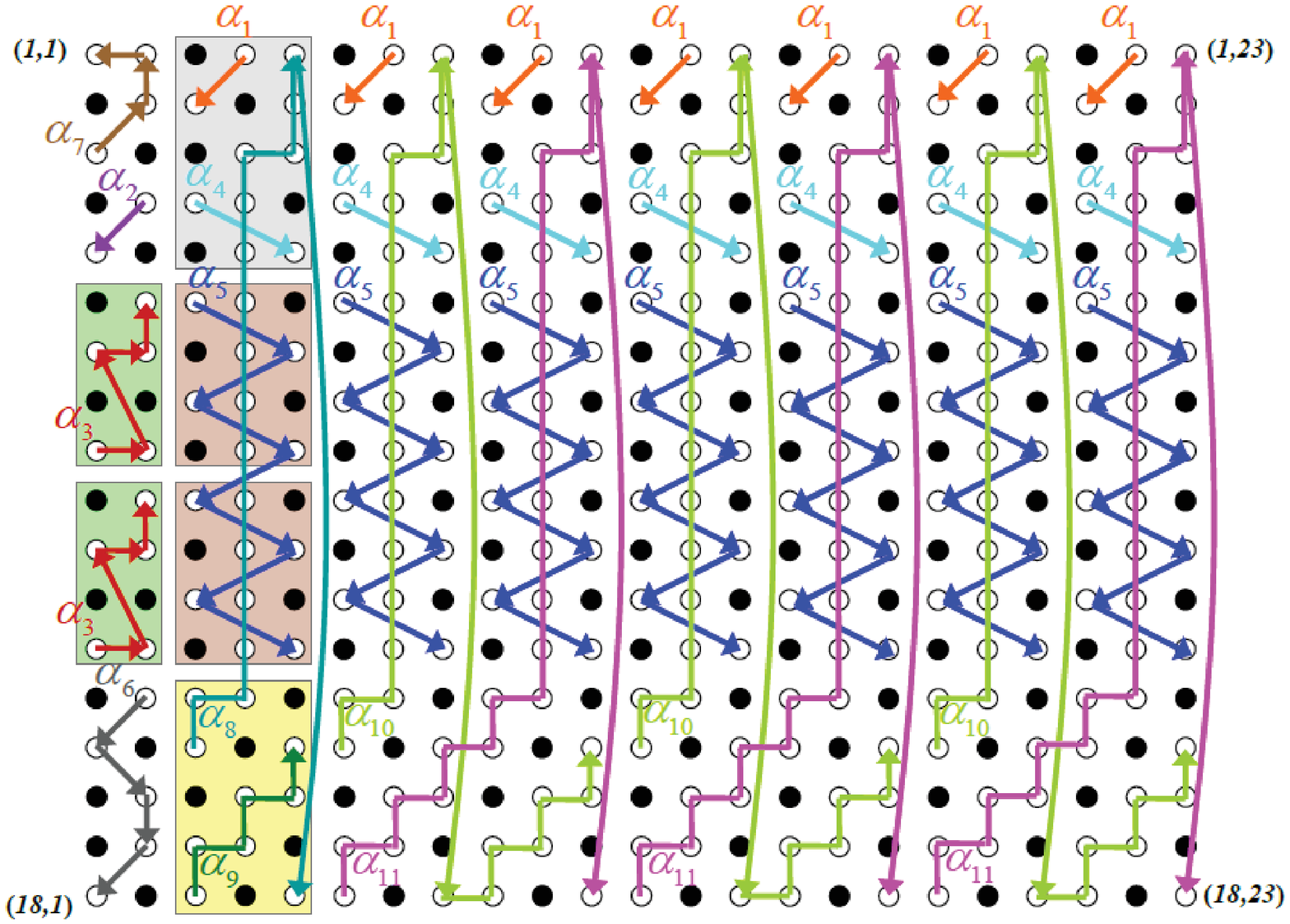}
 \begin{description}
    \item[Figure 12.]
  $\mbox{\rm min-seed}(C_{18} \oslash C_{23},3)\leq
    141$.
 \end{description}
 \bigskip

 \includegraphics[scale=0.7]{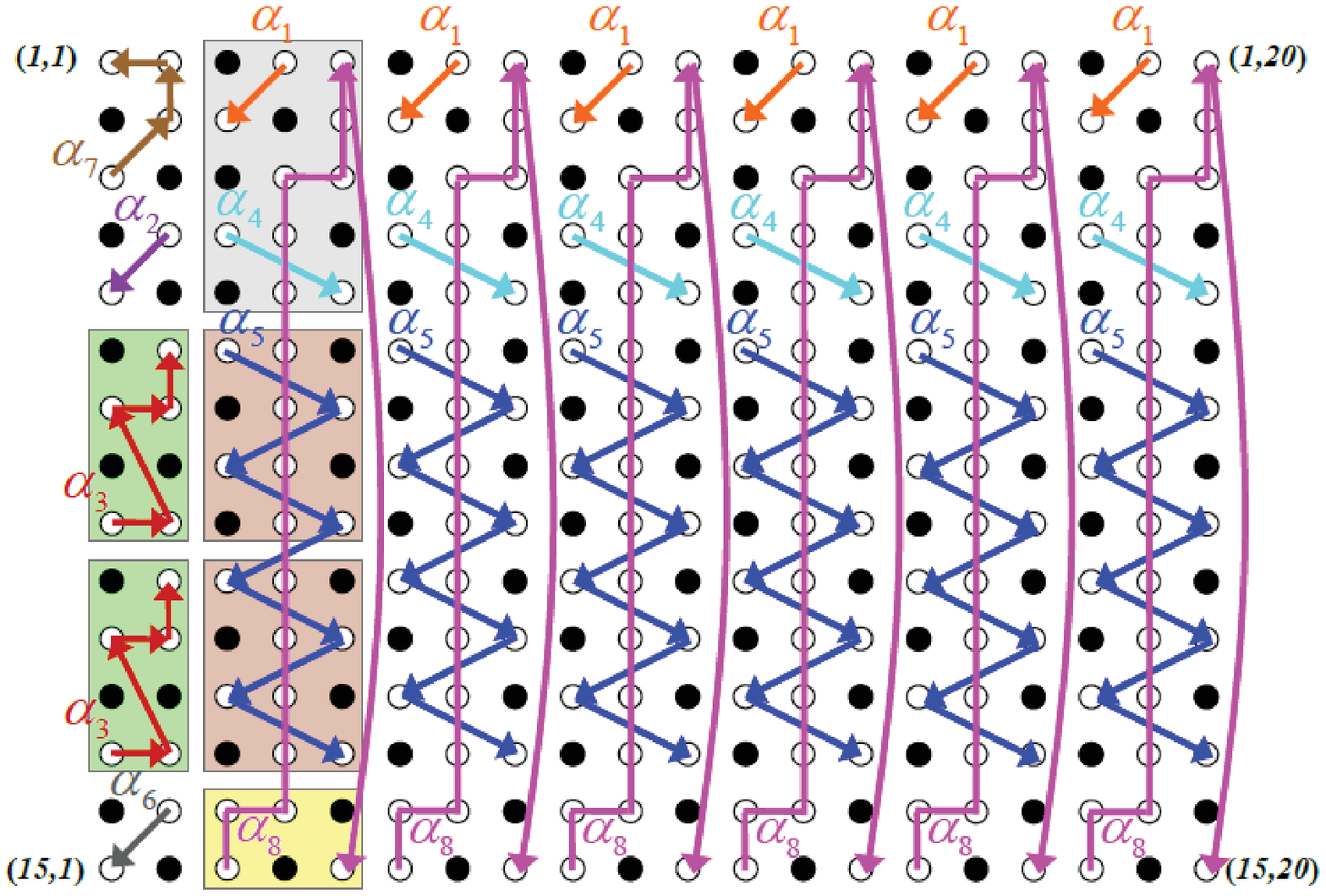}
 \begin{description}
    \item[Figure 13.]
  $\mbox{\rm min-seed}(C_{15} \oslash C_{20},3)\leq
    102$.
 \end{description}

 \includegraphics[scale=0.75]{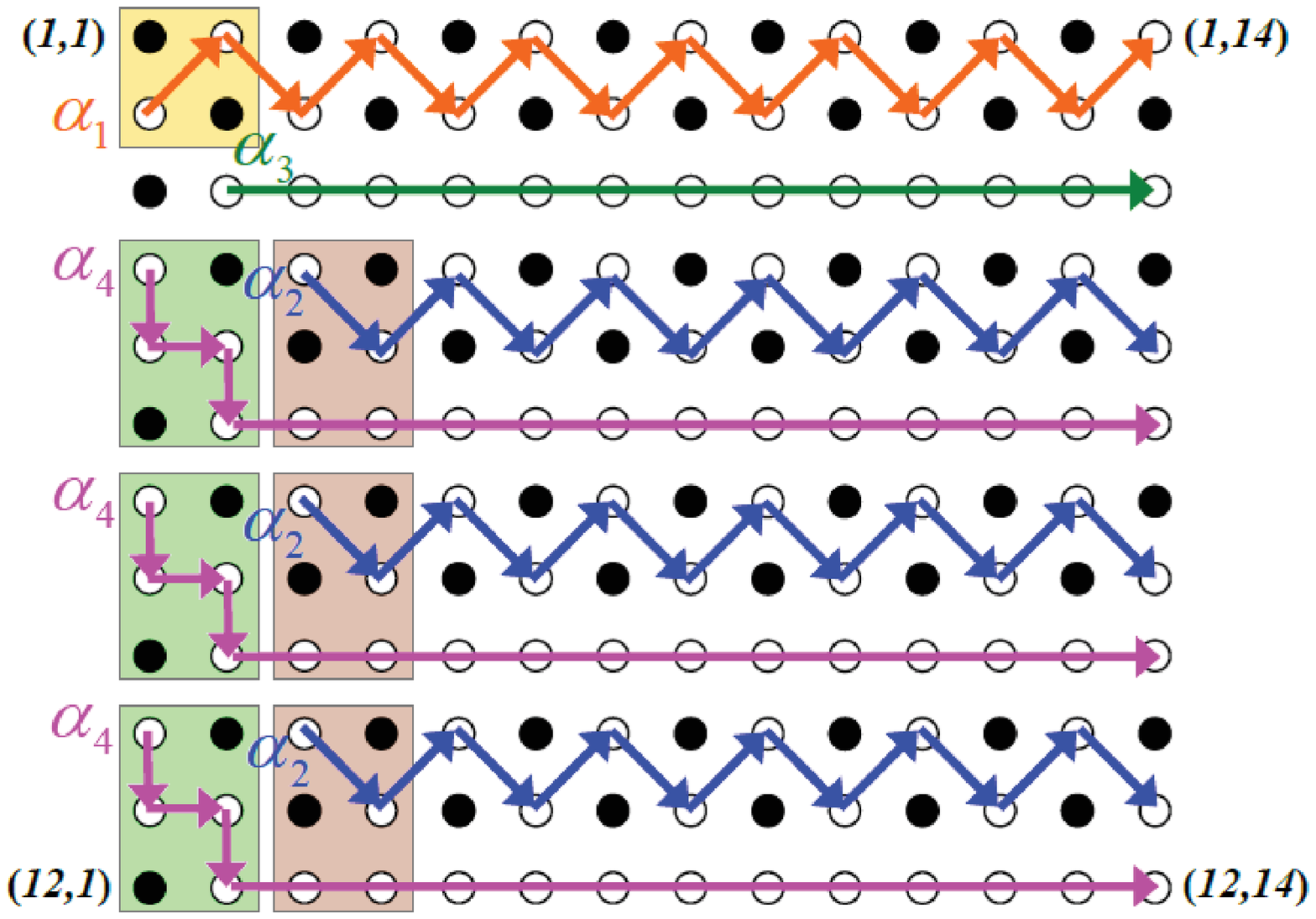}
 \begin{description}
    \item[Figure 14.]
  $\mbox{\rm min-seed}(C_{12} \oslash C_{14},3)=57$.
 \end{description}
 \includegraphics[scale=0.65]{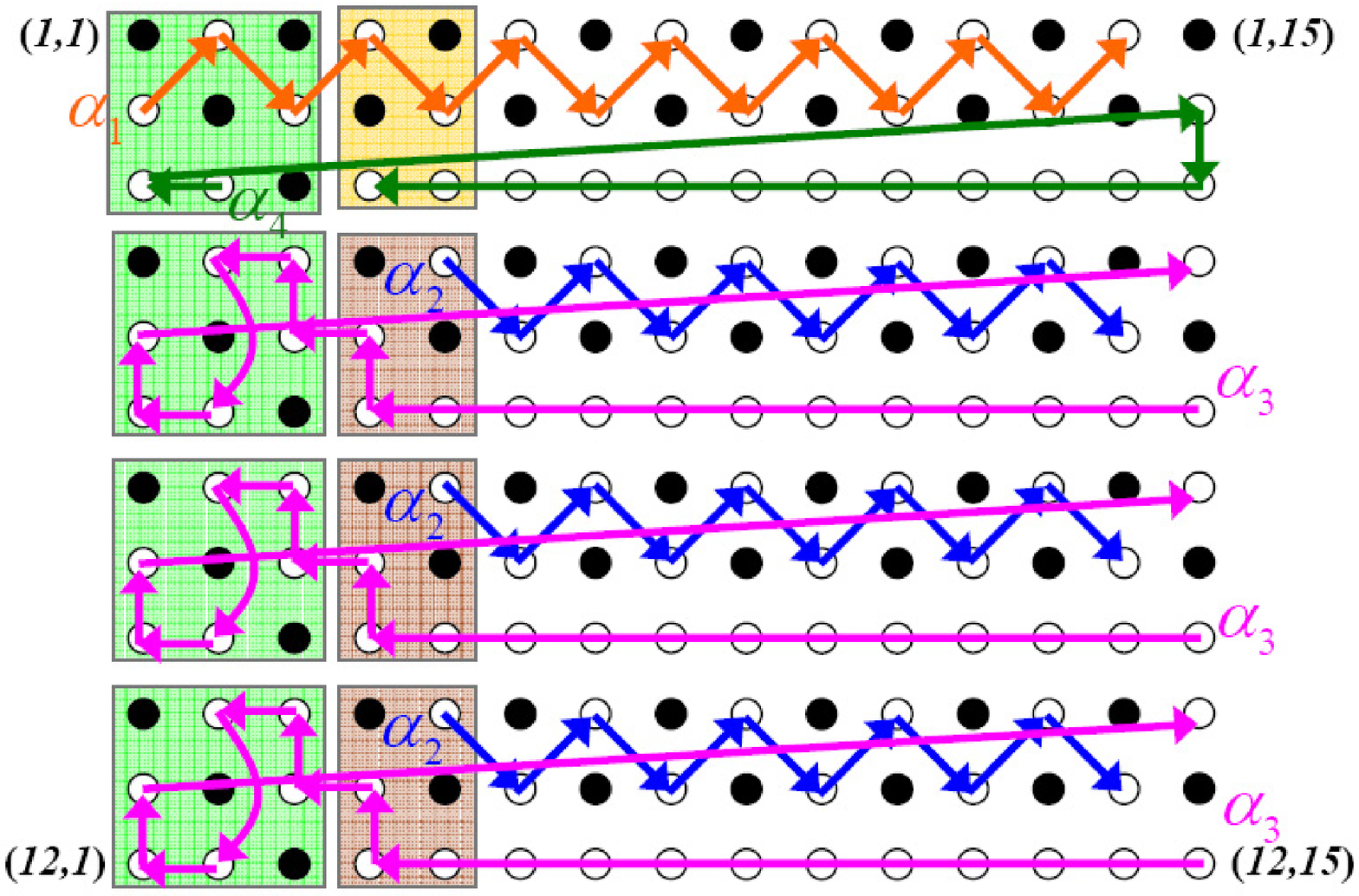}
 \begin{description}
    \item[Figure 15.]
  $\mbox{\rm min-seed}(C_{12} \oslash C_{15},3)=61$.
 \end{description}

 \end{center}
\end{document}